\newcommand{\udots}{\mathinner{\mskip1mu\raise1pt\vbox{\kern7pt\hbox{.}}
  \mskip2mu\raise4pt\hbox{.}\mskip2mu\raise7pt\hbox{.}\mskip1mu}}
\def\BState{\State\hskip-\ALG@thistlm}
\numberwithin{equation}{section}
\newtheorem{thm}[equation]{Theorem}
\newtheorem*{thm*}{Theorem}
\newtheorem{lem}[equation]{Lemma}
\newtheorem{prop}[equation]{Proposition}
\newtheorem{defn}[equation]{Definition}
\newenvironment{problem}{\medskip\begin{minipage}{0.9\textwidth}
\noindent\rule{\textwidth}{1pt} \smallskip }
{\smallskip \rule{\textwidth}{1pt}
\end{minipage}\medskip}
\theoremstyle{remark}
\newtheorem*{remark*}{Remark}
\renewcommand{\leq}{\leqslant}
\newcommand{\bmto}{\rightarrowtail}
\DeclareMathOperator{\gr}{gr}
\DeclareMathOperator{\Aut}{Aut}
\DeclareMathOperator{\Atp}{Aut}
\DeclareMathOperator{\Isom}{Isom}
\DeclareMathOperator{\Adj}{Adj}
\DeclareMathOperator{\im}{{\rm im}}
\DeclareMathOperator{\GL}{GL}
\DeclareMathOperator{\End}{End}
\DeclareMathOperator{\Hom}{Hom}
\newcommand{\pseudo}{\Psi\hspace*{-1mm}\Isom}
\renewcommand{\geq}{\geqslant}
\renewcommand{\leq}{\leqslant}
\title{Testing isomorphism of graded algebras}
\author{Peter A.\ Brooksbank}
\address{Brooksbank, Department of Mathematics, Bucknell University, Lewisburg, PA 17837, USA}
\author{E.\ A.\ O'Brien}
\address{O'Brien, Department of Mathematics, University of Auckland, Private Bag 92019, Auckland,
New Zealand}
\author{James B.\ Wilson}
\address{Wilson, Department of Mathematics, Colorado State University, Fort Collins, CO 80523, USA} 
\thanks{This work was supported in part by the Marsden Fund of
New Zealand via grant UOA 1626, by NSF grants DMS-1620454 and DMS-1620362,
and by the Simons Foundation $\#$281435. We thank the referee for helpful comments.}
\begin{document}

\begin{abstract}
We present a new algorithm to decide isomorphism between
finite graded algebras. For a broad class of nilpotent Lie algebras, 
we demonstrate that it runs in time polynomial in the order of the 
input algebras. 
We introduce heuristics that often dramatically
improve the performance of the algorithm 
and report on an implementation in {\sc Magma}.
\end{abstract}

\maketitle

\section{Introduction}
\label{sec:intro}
It is possible to decide if algebraic objects $A$ and $B$ of order 
$n$ are isomorphic by fixing a generating sequence $a_1,\dots,a_d$ for $A$ and 
searching through all sequences $b_1,\dots,b_d$ in $B$ until we find an 
identification $a_i\mapsto b_i$ that extends to an isomorphism
$A\to B$. This process takes $n^d$ steps: for groups and algebras, where $d$ can be 
as large as $\log n$, the resulting complexity is not polynomial in the orders of the input objects.
Despite significant progress over the years on various isomorphism problems, 
asymptotic improvements over ``brute force" for substantial classes of objects are rare. 

We introduce a general strategy for testing isomorphism of 
finite graded algebras.
It is particularly effective for nilpotent matrix Lie algebras, and 
we describe a class of such algebras for which our isomorphism test
runs in time polynomial  in the order of the input algebra.  
We have also implemented a version in {\sc Magma} \cite{magma}.

While graded algebras are natural structures in their own right, 
they also arise from the study of other algebraic structures.
For example, given a ring $R$, one can compute its Jacobson radical, $J$,
and consider the graded algebra $\gr R=R/J\oplus \bigoplus_{i=1}^{\infty} J^i/J^{i+1}$.
Similarly, the intersection $O_p(G)$
of the Sylow $p$-subgroups of a finite group $G$ for a prime $p$ dividing $|G|$ is 
normal and nilpotent, so  $\gr G=\mathbb{Z}[G/O_p(G)]\oplus \bigoplus_{i=1}^{\infty} \eta_i/\eta_{i+1}$ is a graded Lie $\Bbb{F}_p$-algebra,
where $\eta_1:=O_p(G)$ and $\eta_{i+1}=[\eta_{i},\eta_1]\eta_i^p$.
Isomorphism tests for associated graded structures work with
individual graded components, often exploiting the power of linear algebra.

Existing uses of graded algebras within isomorphism testing proceed
sequentially through the grading; see~\cite{eick}, for example. 
Starting with the first graded component, 
one considers all possible isomorphisms between corresponding components,
and uses the graded product to decide which of them induces an
isomorphism between subsequent components. While this iterative approach usually
offers improvements over brute force, a single large 
homogeneous component may create a bottleneck.  
Our approach is not constrained by the need
to process the components sequentially.  Instead, it identifies
sections of the two graded algebras between which the list
of possible maps is small and can be computed quickly.
It then determines which of these maps between sections lift to 
isomorphisms of the algebras.
\smallskip

To state our main result, we require a few preliminaries. 
Let $K$ be a finite field. A {\em $K$-algebra} is a 
$K$-module, $A$, equipped with a (possibly nonassociative) $K$-bilinear product 
$\circ\colon A\times A\bmto A$. If, as a $K$-module, 
\[
A=\bigoplus_{s=0}^{\infty} A_s,~~~~~~\mbox{where}~~A_s\circ A_t\leq A_{s+t}, 
\]
then $A$ is {\em $\mathbb{N}$-graded}.  
We assume that $A$ is {\em generated in degree} 1 in the sense that,
for all $s>0$, 
$A_s=\sum_{j=1}^{s-1}A_j\circ A_{s-j}$.  An isomorphism
between graded algebras that maps each graded component of one
algebra to the corresponding component of the other is 
a {\em graded isomorphism}.
For each $s\geq 1$, one can restrict the product on $A$ to produce
a {\em bilinear map} $A_1\times A_s\bmto A_{s+1}$. The ring of
{\em adjoints} of this bilinear map, denoted $\mathcal{M}_s$, is 
the largest ring $R$ faithfully represented on
$A_1\oplus A_s$ such that the bilinear map $A_1\times A_s\bmto A_{s+1}$ 
factors through the tensor product space $A_1\otimes_{R} A_s$. 
Defined explicitly as operators in~\eqref{eq:adjoints}, 
$\mathcal{M}_s$ has group of units $\mathcal{M}_s^{\times}$.
\smallskip

Our main result, proved in Section~\ref{sec:iso}, is the following.

\begin{thm}
\label{thm:main}
There is a deterministic algorithm that, given two finite graded $K$-algebras $A$ and $B$
generated in degree $1$, decides whether or not there is a graded 
isomorphism $A\to B$.
The algorithm has complexity
\[
O(\min_s\{|\mathcal{M}_{s}^{\times}|\cdot|\Aut(A_{s+1})|\}\cdot
(\dim A)^{2\omega} \cdot \log^2 |K|),
\]
where $s$ runs over the grading and $2\leq \omega<3$ is the exponent of matrix 
multiplication over $K$.
\end{thm}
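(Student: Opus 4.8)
\medskip
\noindent\textbf{Proof idea.}
The plan is to carry out the strategy outlined in the introduction: rather than search over all maps $A_1\to B_1$, search over the pseudo-isometries of a single, well-chosen homogeneous section. Two reductions come first. Because $A$ and $B$ are generated in degree $1$, a graded isomorphism $\phi\colon A\to B$ is determined by $\phi_1:=\phi|_{A_1}$, since $A_s=\sum_{j}A_j\circ A_{s-j}$ forces the value of $\phi|_{A_s}$ on each product $a_j\circ a_{s-j}$; conversely, an arbitrary $K$-linear map $\psi\colon A_1\to B_1$ determines at most one candidate graded homomorphism, and whether that candidate is well defined (respects all relations among homogeneous products) and bijective is decided by linear algebra on spaces of dimension at most $\dim A$. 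Call this the \emph{extension test}. Hence it suffices, for one suitably chosen degree $s$, to enumerate a set of linear maps $A_1\to B_1$ that is certain to contain $\phi_1$ whenever a graded isomorphism exists, and to run the extension test on each.

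For the second reduction, restricting $\circ$ produces bilinear maps $b^A_s\colon A_1\times A_s\bmto A_{s+1}$ and $b^B_s\colon B_1\times B_s\bmto B_{s+1}$, and any graded isomorphism $\phi$ restricts to a pseudo-isometry $(\phi|_{A_1},\phi|_{A_s},\phi|_{A_{s+1}})$ of these. The set $P_s$ of all pseudo-isometries $b^A_s\to b^B_s$ is either empty, whence no graded isomorphism exists, or a coset $\theta_0\,\PAtp(b^A_s)$ of the group $\PAtp(b^A_s)$ of self-pseudo-isometries of $b^A_s$. I would bound its size via the exact sequence
\[
1\longrightarrow\Isom(b^A_s)\longrightarrow\PAtp(b^A_s)\longrightarrow\GL(A_{s+1}),
\]
where the last map records the action on $A_{s+1}$: its image lies in $\Aut(A_{s+1})$, and its kernel is the isometry group of $b^A_s$, each isometry yielding an invertible element of the adjoint ring, so that $\Isom(b^A_s)$ embeds in $\mathcal{M}^{\times}_s$. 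Therefore $|P_s|\leq|\PAtp(b^A_s)|\leq|\mathcal{M}^{\times}_s|\cdot|\Aut(A_{s+1})|$.

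The algorithm is then as follows. For every $s$ in the grading, compute the adjoint ring $\mathcal{M}_s$ from its defining linear equations~\eqref{eq:adjoints}, obtain $|\mathcal{M}^{\times}_s|$ from the structure theory of finite associative algebras and $|\Aut(A_{s+1})|$ by linear algebra, and pick the $s$ that minimises $|\mathcal{M}^{\times}_s|\cdot|\Aut(A_{s+1})|$. For this $s$, use the machinery for computing pseudo-isometries of bilinear maps, which works with modules and units over the adjoint ring $\mathcal{M}_s$, to decide whether $P_s\ne\varnothing$ and, if so, to return one $\theta_0\in P_s$ and a generating set for $\PAtp(b^A_s)$; then enumerate $P_s=\theta_0\,\PAtp(b^A_s)$ and run the extension test on the $A_1$-component of each element. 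Report that a graded isomorphism exists precisely when some extension test succeeds. Soundness is clear, because the extension test passes only for a genuine graded isomorphism; completeness holds because a graded isomorphism $\phi$ contributes $(\phi|_{A_1},\phi|_{A_s},\phi|_{A_{s+1}})\in P_s$, so $\phi_1$ is among the maps tested and passes.

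For the complexity, the adjoint ring $\mathcal{M}_s$ sits inside the matrix algebra on $A_1\oplus A_s$, hence has $K$-dimension $O((\dim A)^2)$; computing it, analysing $\mathcal{M}^{\times}_s$, and running the pseudo-isometry machinery are therefore linear-algebra and associated-algebra tasks on spaces of dimension $O((\dim A)^2)$, of total cost $O((\dim A)^{2\omega}\log^2|K|)$, which also bounds the preprocessing over all $s$. The main loop runs at most $|\mathcal{M}^{\times}_s|\cdot|\Aut(A_{s+1})|$ times, and each pass performs a bounded number of operations of the same kind — solving the linear system testing extendability of the current $A_1$-component, then propagating it through the structure constants of $A$ and $B$ — again within $O((\dim A)^{2\omega}\log^2|K|)$. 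Multiplying gives the stated bound. I expect the main obstacle to be the content of the third paragraph: reducing the computation of $\theta_0$ and of generators for $\PAtp(b^A_s)$ to module-isomorphism and unit-group problems over the possibly noncommutative ring $\mathcal{M}_s$, and checking that these stay within the claimed cost; the exact-sequence bound and the extension test are, by comparison, routine.
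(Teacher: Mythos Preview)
Your overall architecture matches the paper's: restrict to a chosen layer $A_1\times A_s\bmto A_{s+1}$, compute the coset of triples $(\phi_1,\phi_s,\phi_{s+1})$ intertwining the two bimaps, bound its size via the exact sequence with kernel the principal autotopisms $\cong\mathcal{M}_s^{\times}$, then test each triple for extension to a graded isomorphism. Two corrections are in order.

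First, a terminology slip: the triples you call \emph{pseudo-isometries} are what the paper calls \emph{isotopisms}; pseudo-isometries (Section~\ref{sec:weakly}) are defined only for weakly Hermitian bimaps, and $b^A_s$ with $s>1$ has $A_1\ne A_s$ in general. Likewise your $\Isom(b^A_s)$ is really $\Aut(U_*)^{(0)}$, identified with $\mathcal{M}_s^{\times}$ by Proposition~\ref{prop:principal}(iii).

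Second, and more substantively, your cost accounting in the final paragraph misplaces the $|\Aut(A_{s+1})|$ factor. You assert that producing $\theta_0$ and generators for the autotopism group is a polynomial-time associative-algebra task on $\mathcal{M}_s$; this is not established, and it is not what the paper does. Algorithm~\ref{algo:isotopism-coset} computes the isotopism coset by \emph{looping over} a group $G$ with $\Aut(U_*)|_{U_0}\le G\le \Aut(A_{s+1})$ and, for each $g\in G$, solving a \emph{principal} isotopism problem (Theorem~\ref{thm:principal}) in time $O((\dim A)^{2\omega}\log^2|K|)$. Thus the $|\Aut(A_{s+1})|$ factor already enters when \emph{constructing} $P_s$, not only when enumerating it afterward. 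The theorem's bound still holds because both the construction cost $|\Aut(A_{s+1})|\cdot(\dim A)^{2\omega}\log^2|K|$ and the enumeration cost $|P_s|\cdot(\dim A)^{2\omega}\log^2|K|\le|\mathcal{M}_s^{\times}|\cdot|\Aut(A_{s+1})|\cdot(\dim A)^{2\omega}\log^2|K|$ are dominated by the stated expression, but your sketch of how to obtain $\theta_0$ and generators in polynomial time is precisely the step that does not go through as written.
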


By comparison, the sequential approach has complexity
$\tilde{O}(|K|^{(\dim A)^2})$. While the estimate in Theorem~\ref{thm:main}
is not asymptotically better in all cases, the flexibility to
choose which component to process first may lead to dramatic improvements. 
As one illustrative example, define $\mathcal{F}_q(d_1,\dots,d_{\ell})$ to be the smallest class of 
graded Lie algebras containing the Lie subalgebras 
${\frak L}_*\leq \mathfrak{gl}_{d_1+\ldots+d_{\ell}}(q)$ satisfying 
\begin{align*}
 	[{\frak L}_*,{\frak L}_*] & = \left\{
 	\begin{bmatrix} 
 	0_{d_1} & 0_{d_1,d_2} & * & \cdots & * \\
 	& \ddots & \ddots & \ddots & \vdots \\
	& & \ddots & \ddots & * \\
 	& &  & 0_{d_{\ell-1}} & 0_{d_{\ell-1},d_{\ell}} \\
 	& & & & 0_{d_{\ell}}
 	\end{bmatrix}\right\}
 	\leq {\frak L}_* \leq 
 	\left\{
 	\begin{bmatrix} 
 	0_{d_1} & * & \cdots & * \\
 	& \ddots & \ddots & \vdots \\
 	& & \ddots & * \\
 	&	& & 0_{d_{\ell}} 
 	\end{bmatrix}\right\},
\end{align*}
where $0_{m,n}$ denotes the zero $(m\times n)$-matrix, and $0_n=0_{n,n}$.
(A {\em class of algebras}
is a collection closed under isomorphism; in particular, no specific 
representation is assumed as part of the input.)
An analysis of the algorithm in Theorem~\ref{thm:main} applied to 
$\mathcal{F}(d_1,\dots,d_{\ell})$ establishes the 
following. 

\begin{thm}
\label{thm:Lie-main}
Isomorphism testing in $\mathcal{F}_q(d_1,\dots,d_{\ell})$ is in time $\tilde{O}(|{\frak L}|^{m^2/\varepsilon})$, where 
\begin{align*}
	\varepsilon & = 
		\sum_{1\leq i<j\leq \ell} d_i d_j, &
	m & = \min_{s\leq 1+\frac{d}{2}}\left\{\sum_{i=1}^{\ell-s} d_i d_{i+s}\right\}.
\end{align*}
The number of isomorphism classes in $\mathcal{F}(d_1,\dots,d_{\ell})$ is
$q^{O((d_1+\cdots+d_{\ell})^2)}$.
\end{thm}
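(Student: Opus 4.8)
The plan is to run the algorithm of Theorem~\ref{thm:main} on members of $\mathcal{F}_q(d_1,\dots,d_\ell)$ and estimate its cost; the bound on the number of isomorphism classes will fall out of identifying the finite data that pins a member down. Since the class is closed under isomorphism and the algorithm accepts an arbitrary presentation, it suffices to analyze it on the displayed representatives $\mathfrak{L}={\frak L}_*$. I would first fix notation: set $V_i=K^{d_i}$, let $W=V_1\oplus\cdots\oplus V_\ell$ carry its flag, and let $\mathfrak{u}\leq\mathfrak{gl}(W)$ be the strictly block-upper-triangular subalgebra, graded so that $A_s$ is the span of the blocks $(i,i+s)$. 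Then for $s\geq2$ the component $A_s$ is the full band, so $\dim A_s=\sum_{i=1}^{\ell-s}d_id_{i+s}$, while the sandwich hypothesis forces $[\mathfrak{L},\mathfrak{L}]=\bigoplus_{s\geq2}A_s$ and leaves $A_1$ a subspace of $\bigoplus_i\Hom(V_{i+1},V_i)$ that generates $[\mathfrak{L},\mathfrak{L}]$. In particular $\dim\mathfrak{L}\leq\sum_{1\leq i<j\leq\ell}d_id_j=\varepsilon$, with equality for the generic member, so $|\mathfrak{L}|=q^{\dim\mathfrak{L}}$ with $\dim\mathfrak{L}=\Theta(\varepsilon)$.

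Next I would choose the degree at which to run the algorithm. Since $\dim A_s=\sum_{i=1}^{\ell-s}d_id_{i+s}$, pick the admissible index $t\leq1+\tfrac d2$ realizing $m=\min_{s\leq1+d/2}\dim A_s$; we may assume $t\geq2$ (the case $t=1$, in which the degree-one component is itself smallest, is handled symmetrically), and run Theorem~\ref{thm:main} at $s=t-1$, so that $A_{s+1}=A_t$ has dimension $m$. Because an automorphism of a nonassociative algebra is in particular a linear automorphism of its underlying module, $|\Aut(A_{s+1})|\leq|\GL(A_{s+1})|<q^{(\dim A_{s+1})^2}=q^{m^2}$, which takes care of the second factor in the complexity estimate.

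The crux is the adjoint ring $\mathcal{M}_s$ of the bilinear map $A_1\times A_s\to A_{s+1}$, which on the block $(i,i+s+1)$ of the image is $(x,y)\mapsto x_{i,i+1}\,y_{i+1,\,i+s+1}-y_{i,\,i+s}\,x_{i+s,\,i+s+1}$. The sandwich hypothesis makes this map as nondegenerate as its banded shape permits, and the heart of the matter is to convert that into a genuine structural description forcing its ring of adjoints to be small — concretely, $|\mathcal{M}_s^{\times}|\leq q^{O(m^2)}$. (When $\dim A_{s+1}=1$ the map is a nondegenerate alternating form and $\mathcal{M}_s=K$; in general one splits the map along the layer decomposition, observes that a layer which the bracket contracts contributes nothing to $\mathcal{M}_s$, and bounds the surviving structure in terms of $m$ alone.) Feeding these two estimates into Theorem~\ref{thm:main} gives a running time of
\[
|\mathcal{M}_s^{\times}|\cdot|\Aut(A_{s+1})|\cdot(\dim\mathfrak{L})^{2\omega}\cdot\log^2 q \;=\; q^{O(m^2)}\cdot\mathrm{poly}(\log|\mathfrak{L}|) \;=\; \tilde O\!\big(|\mathfrak{L}|^{m^2/\varepsilon}\big),
\]
the last step writing $q^{O(m^2)}$ as a power of $|\mathfrak{L}|$ via $\dim\mathfrak{L}=\Theta(\varepsilon)$.

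Finally, for the number of isomorphism classes: a member of $\mathcal{F}_q(d_1,\dots,d_\ell)$ is determined up to graded isomorphism by its degree-one component $A_1\leq\mathfrak{u}_1$, since every higher component and every product is the fixed banded pattern; and two subspaces yield isomorphic members exactly when the induced bracket maps $\Lambda^2A_1\to A_2$ — together with their higher analogues, which are already forced by $A_1$ — agree up to the action of $\GL(A_1)\times\GL(A_2)$. Because the members in question have surjective such bracket, the large group $\GL(A_2)$ collapses almost all subspaces to a common class, and a dimension count of the resulting orbit space bounds the number of isomorphism classes by $q^{O((d_1+\cdots+d_\ell)^2)}$. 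The main obstacle throughout is the estimate on $\mathcal{M}_s$: one must turn ``maximal nondegeneracy for the banded shape'' into a usable structure theorem for the adjoint ring, and it is precisely here, and in keeping $A_{s+1}$ nonzero, that the restriction $s\leq1+\tfrac d2$ enters; the remaining arithmetic and the orbit count are routine by comparison.
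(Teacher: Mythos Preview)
Your overall plan matches the paper's: feed a dense $\mathfrak{L}$ into Theorem~\ref{thm:main} at the degree minimizing $\dim\mathfrak{L}_{s+1}$, bound $|\Aut(\mathfrak{L}_{s+1})|\leq q^{m^2}$ trivially, and bound the adjoint ring separately. You correctly identify the adjoint bound as the crux.

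The gap is that you do not actually bound $|\mathcal{M}_s^\times|$: you call it the ``main obstacle,'' gesture at a layer decomposition, and assert the answer is $q^{O(m^2)}$. The paper's Section~\ref{sec:auto-est} is devoted precisely to filling this. One writes the structure-constant matrix of $\mathfrak{L}_1\times\mathfrak{L}_s\to\mathfrak{L}_{s+1}$ in the block form~\eqref{eq:main-panel}, then decomposes $U_*=\bigcap_i U_*^i$ along the block antidiagonal so that each $U_*^i$ carries a single pair $(a_i,-a_i^\dagger)$. Each $\mathcal{M}(U_*^i)$ is computed explicitly (it is the padding of a $2\times2$ block pattern), and because the nonzero positions shift with $i$, the intersection $\bigcap_i\mathcal{M}(U_*^i)$ collapses to nearly block-diagonal matrices, giving $\dim\mathcal{M}(U_*)\leq e_1e_2+e_{\ell-1}e_\ell+\sum_i e_i^2$ with $e_i=d_i$. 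Two points to note: first, the density hypothesis is what makes each $a_i$ full and forces the collapse; second, this bound is in terms of the flag profile $(d_1,\dots,d_\ell)$, not in terms of $m$. Your proposal to ``bound the surviving structure in terms of $m$ alone'' is not the route the paper takes, and $\sum_i d_i^2$ is not in general $O(m^2)$. The restriction $s\leq 1+d/2$ enters not in the $\mathcal{M}$-computation but simply to keep the codomain $\mathfrak{L}_{s+1}$ nondegenerate. Also, your rewriting step $q^{O(m^2)}=\tilde O(|\mathfrak{L}|^{m^2/\varepsilon})$ assumes $\dim\mathfrak{L}=\Theta(\varepsilon)$, which requires some care since $A_1$ may be a proper subspace of the first band.

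For the isomorphism-class count the paper goes a different way from your orbit estimate: it observes that removing rows and columns produces characteristic quotients, reduces to three-block configurations, and invokes Higman--Sims style counting to show the number of classes is \emph{large}. Your $\GL(A_1)\times\GL(A_2)$ orbit argument aims at the easier upper bound; the paper is concerned with the lower bound, which is what justifies calling the running time in the first part reasonable.
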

The complexity in Theorem~\ref{thm:Lie-main}
is polynomial in the size of the algebras when both 
$m^2, \varepsilon \in O((d_1+\ldots+d_{\ell})^2)$, a condition that holds,
for instance, when $d_i$ is bounded, and in many other cases.
\smallskip

One of the motivations of this work is to develop practical tools to 
decide isomorphism within families of finite groups and algebras. 
From this viewpoint, we are concerned with developing algorithms 
that perform well as a function of the length of 
standard encodings of the input 
algebras, such as by generating sets, or as bases with structures constants. 
Despite the significant improvements offered by the 
strategy underlying our main results, we have encountered 
situations where the necessary exhaustive search is still intractably large. 

To address this concern, in Section~\ref{sec:local} we introduce heuristics
that use local invariants to deduce global restrictions on the 
possible automorphisms arising from $A_1\times A_s\bmto A_{s+1}$, thereby reducing 
substantially the ensuing exhaustive search. More precisely, we design a labeling of the 
points and lines of the projective  space on $A_{s+1}$ that is invariant under isomorphism. 
The labeling offers sufficient variability that the resulting constraints often reduce 
intractable searches to the practical realm. 
We also revisit another ``local-to-global" process called {\em fingerprinting} 
that was introduced in~\cite{ELGOB}. In Section \ref{sec:imp} we report on our implementation 
of the algorithms in {\sc Magma}, and show for some families of examples
that our techniques have significant practical impact.

We prove a more general version of 
Theorem~\ref{thm:main} in which algebras are not  necessarily generated in degree 1, 
and may be graded using an arbitrary monoid. 
This general treatment allows our algorithms to take as input more refined gradings
on the given algebras that often decompose them into smaller pieces. 
The development of such refined gradings is an emerging area of study that
may lead to faster isomorphism tests;  see~\cite{Maglione:filters}, for example. 

\section{Preliminaries}
\label{sec:prelim}
Throughout the paper $K$ denotes a finite field, and all $K$-vector spaces are 
finite-dimensional. The $K$-dual of a $K$-vector space $V$ is denoted by 
$V^{\dagger}$. If $f\colon U\to V$ is a linear map, then 
$f^{\dagger}\colon V^{\dagger}\to U^{\dagger}$ denotes the dual map.  

\subsection{Graded algebras}
\label{subsec:graded}
For convenience in our exposition we assume that an 
algebra $A$ is specified by a basis $\{a_1,\dots,a_d\}$ together
with {\em structure constants}  
$[\alpha_{ij}^k]$ defined by
\begin{align*}
	a_i\cdot a_j & = \sum_k \alpha_{ij}^k a_k.
\end{align*}
Although the structure constant model of an algebra is both less compact and less efficient than 
alternatives---such as matrix algebras specified by generating sets, multivariate polynomials, 
and more general quotients of free algebras---it provides a convenient,  uniform starting point.

Let $M$ be a commutative
monoid with pre-order $\prec$, where $a\prec b$ if there exists $c \in M$ such that 
$a+c=b$. We assume that, relative to $\prec$, every nonempty subset has a 
minimal element.  We also assume that $0$ is a minimal element of $M$ so that 
$M$ is conical. This ensures that we can perform (Noetherian) induction on the 
indices in $M$: if $S\subset M$ has the property that for every $s\in S$ there exists 
$t\in S\setminus\{s\}$ with $t\prec s$, then $S=\emptyset$. The
monoids $M=\mathbb{N}^c$ satisfy the necessary conditions.

An algebra $A_*$ is {\em $M$-graded} if $A_*=\bigoplus_{s\in M} A_s$ and for all
$s,t\in M$, $A_s\circ A_t\subset A_{s+t}$.  We assume, for each $s$, that
$A_s$ is the $K$-linear span of $A_s\cap \{a_1,\dots,a_d\}$. 
We say that $A$ is {\em generated in degrees $T$} if
\begin{align}\label{eq:gen-degree}
	&(\forall s) & s\in M\setminus T & ~~\Longrightarrow
	\sum_{{\tiny \begin{array}{c} s=s_1+s_2, \\ s_1,s_2\notin 
	\{0,s\} \end{array}}} A_{s_1}\circ A_{s_2}~~=~A_s.
\end{align}
As $M$ is conical, if $A_*$ is generated in degrees $T$, then $0\in T$.  
The following observation is a direct consequence of the induction principle.

\begin{lem}
If $A_*$ is an $M$-graded algebra generated in degrees $T$ then, for each
$s\in M$, $A_s$ consists of linear combinations of products of
elements in $\bigcup_{t\in T}A_t$.
\end{lem}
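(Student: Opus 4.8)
The plan is to prove the statement by Noetherian induction on the monoid $M$ using the pre-order $\prec$, exactly as licensed by the induction principle established in the preamble. First I would set up the claim to be proved by induction: for $s \in M$, let $P(s)$ be the assertion that $A_s$ consists of $K$-linear combinations of products of elements drawn from $\bigcup_{t \in T} A_t$. I want to show $P(s)$ holds for all $s$, so by the induction principle it suffices to show that the set $S = \{s \in M : P(s) \text{ fails}\}$ has no minimal element; equivalently, assuming $P(t)$ for all $t \prec s$ with $t \neq s$, I derive $P(s)$.

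The case analysis splits on whether $s \in T$. If $s \in T$, then $A_s$ is spanned by $A_s \cap \{a_1,\dots,a_d\}$, and each such basis vector is itself a (length-one) "product" of an element of $\bigcup_{t\in T} A_t$ — or, if one prefers products of length at least two, one observes that degree-$s$ generators are allowed outright. In any case $P(s)$ is immediate. If $s \notin T$, then by the generation hypothesis~\eqref{eq:gen-degree} we have $A_s = \sum A_{s_1}\circ A_{s_2}$, the sum taken over decompositions $s = s_1 + s_2$ with $s_1, s_2 \notin \{0, s\}$. For each such decomposition, $s_1 \prec s$ and $s_2 \prec s$ since $s_1 + s_2 = s$ exhibits the witnesses, and moreover $s_1 \neq s$ and $s_2 \neq s$ by the constraint in the sum. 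Hence the inductive hypothesis applies to both $A_{s_1}$ and $A_{s_2}$: every element of $A_{s_1}$ is a $K$-linear combination of products of elements of $\bigcup_{t\in T} A_t$, and likewise for $A_{s_2}$. Expanding the bilinear product $A_{s_1}\circ A_{s_2}$ by bilinearity, any element of $A_{s_1}\circ A_{s_2}$ is a $K$-linear combination of products $(\text{word}_1)\circ(\text{word}_2)$, each of which is again a product of elements of $\bigcup_{t\in T}A_t$ (concatenating the two words, with some fixed association). Summing over the finitely many decompositions gives $P(s)$.

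The one technical point to handle carefully — and the nearest thing to an obstacle — is the non-associativity of $\circ$: a "product of elements" must be understood as a fully parenthesized word, and one should be explicit that the inductive step produces such parenthesized words by nesting the words obtained for $A_{s_1}$ and $A_{s_2}$ under a single outermost multiplication. No rebracketing is needed, so this is merely a matter of bookkeeping. A second minor point is the degenerate possibility that for some $s \notin T$ the indexing set of~\eqref{eq:gen-degree} is empty, forcing $A_s = 0$; then $P(s)$ holds vacuously. Finally I would remark that well-definedness of the induction is guaranteed by the standing hypotheses on $M$ (conical, every nonempty subset has a $\prec$-minimal element), so the argument is complete.
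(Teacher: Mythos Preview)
Your proposal is correct and follows exactly the route the paper intends: the paper states the lemma as ``a direct consequence of the induction principle'' without further detail, and you have simply written out that Noetherian induction on $(M,\prec)$ explicitly, including the case split on $s\in T$ versus $s\notin T$ and the bookkeeping for non-associative products. There is nothing to add.
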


If $M=\mathbb{N}$, then the generating
degrees are $0$ and $1$. Here, $A_0$ contributes no shift in the 
grading; it is customary to ignore $0$ and regard $A_*$ as 
generated in degree $1$.  The main results in the introduction were formulated 
for this special case.

As an infinite monoid can grade a finite algebra, repetitions 
$A_s=A_t$ for $s\neq t$ are common.  To avoid redundancy,
one can truncate all rays in the conical monoid $M$ in the first place
where all are stable.  This is always possible but may produce a less familiar 
monoid: for each $c\in\mathbb{N}$, associate to $M$ the $\ell$-truncated 
cyclic monoid $\mathbb{N}_{\ell}=\{0,\ldots,\ell\}$, where
$x\boxplus y:=x+y$ if $x+y<\ell$, and $x\boxplus y :=\ell$ otherwise.

\begin{prop}
\label{prop:small-filter}
Let $M=\langle m_1,\ldots,m_d\rangle$, 
let $A_*$ be an $M$-graded algebra, and let $\ell$ be the smallest positive integer satisfying
\begin{align*}
\forall i\in\{1,\ldots,d\},~\forall u\in M\setminus\{0\} & & A_{\ell m_i}=A_{\ell m_i+u}.
\end{align*} 
Then $A_*$ is naturally 
$(\mathbb{N}_{\ell})^d$--graded where 
$A_{(r_1,\ldots,r_d)}=A_{r_1 m_1+\cdots+r_d m_d}$.
In particular, we may assume for fixed $d$ that $|M|\in O((\dim A_*)^{d})$. 
\end{prop}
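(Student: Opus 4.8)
The plan is to reduce the statement to a single combinatorial fact: for each generator $m_i$, every homogeneous component indexed by an element of $\ell m_i+M$ is zero, so truncating the $i$-th coordinate at $\ell$ only identifies components that were already zero and is harmless. I would argue in the free case $M=\mathbb N^d$ with the $m_i$ the standard generators — the case in which $(\mathbb N_\ell)^d$ is the natural target. The \emph{key lemma} is: $A_s=0$ whenever $s=r_1m_1+\cdots+r_dm_d$ with $r_i\ge\ell$ for some $i$. Indeed, since $M=\mathbb N^d$ is cancellative, $\ell m_i+u\ne\ell m_i$ for every $u\ne 0$; as $A_*=\bigoplus_{t\in M}A_t$ is an internal direct sum, distinct summands meet trivially, so the defining equalities $A_{\ell m_i}=A_{\ell m_i+u}$ force $A_{\ell m_i}=0$ and hence $A_{\ell m_i+u}=0$ for all $u\in M\setminus\{0\}$; writing $s=\ell m_i+u$ with $u=(r_i-\ell)m_i+\sum_{j\ne i}r_jm_j\in M$ yields the lemma. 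The existence of $\ell$ is then clear: finite-dimensionality forces all but finitely many $A_s$ to vanish, so the support lies in a box $\{0,\dots,N\}^d$ and any $\ell>N$ satisfies the defining condition; equivalently, the descending chain $W^{(i)}_k:=\sum_{u\in M}A_{km_i+u}$ (with $W^{(i)}_0=A_*$) is eventually constant with eventual value $\bigcap_kW^{(i)}_k=0$, so each $W^{(i)}$ vanishes from some index onward.

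Next I would verify the three grading axioms for $B_{(r_1,\dots,r_d)}:=A_{r_1m_1+\cdots+r_dm_d}$, $\mathbf r\in(\mathbb N_\ell)^d$. Homogeneity of the distinguished basis is inherited from the $M$-grading. For the decomposition, the key lemma shows that $B_{\mathbf r}\ne 0$ forces every $r_i<\ell$, and $\mathbf r\mapsto\sum r_im_i$ is injective on $\{0,\dots,\ell-1\}^d$, so $\bigoplus_{\mathbf r\in(\mathbb N_\ell)^d}B_{\mathbf r}=\bigoplus_{t\in M}A_t=A_*$. For the product rule, $B_{\mathbf r}\circ B_{\mathbf s}\subseteq A_{\sum(r_i+s_i)m_i}$; if $r_i+s_i<\ell$ for all $i$ then $\sum(r_i+s_i)m_i=\sum(\mathbf r\boxplus\mathbf s)_im_i$ and we are done, while if some $r_i+s_i\ge\ell$ then that coordinate of both $\sum(r_i+s_i)m_i$ and $\sum(\mathbf r\boxplus\mathbf s)_im_i$ is $\ge\ell$, so by the key lemma both $A_{\sum(r_i+s_i)m_i}$ and $B_{\mathbf r\boxplus\mathbf s}$ vanish and the inclusion reads $0\subseteq 0$. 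This exhibits the $(\mathbb N_\ell)^d$-grading and gives $|M|\le|(\mathbb N_\ell)^d|=(\ell+1)^d$.

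It remains to bound $\ell$. If the grading stabilizes monotonically — once a component vanishes, so do all components of higher degree — then the support $\{s:A_s\ne 0\}$ is a finite downward-closed subset of $\mathbb N^d$, every coordinate of every support element is at most $\dim A_*-1$ (by downward-closedness the components of degrees $0,e_i,\dots$ up to that coordinate are all nonzero and independent), whence $\ell\le\dim A_*$ and $|M|\in O((\dim A_*)^d)$ for fixed $d$. This last bound is the step I expect to require the most care: without monotone stabilization a component can vanish and then reappear in higher degree, so the chains $W^{(i)}_k$ may have flat steps and $\ell$ — the stabilization index, not the number of distinct subspaces in the chain — need not be $O(\dim A_*)$; the gradings that motivate the paper (radical or lower-central filtrations) do stabilize monotonically, by Nakayama. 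A secondary point is the injectivity of $\mathbf r\mapsto\sum r_im_i$ used in the decomposition step: for a general finitely generated $M$ it may fail, and one then grades by the image of $(\mathbb N_\ell)^d$ in $M$, which only sharpens the cardinality estimate.
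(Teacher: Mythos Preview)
The paper states this proposition without proof, so there is no authorial argument to compare against.  Your approach is sound and essentially complete for the main claim.  The key lemma---that the defining stabilisation condition together with the direct-sum axiom forces $A_s=0$ whenever some coordinate of $s$ is at least $\ell$---is the right observation, and your verification of the $(\mathbb{N}_\ell)^d$-grading axioms from it is clean.

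You are also right to flag the final bound $|M|\in O((\dim A_*)^d)$ as the delicate step.  Your counterexample schema (a two-dimensional algebra supported at degrees $0$ and $N$ with $N$ arbitrary) shows that, absent monotonicity, the minimal $\ell$ need not be controlled by $\dim A_*$.  The phrase ``we may assume'' in the statement is doing real work here: the gradings the paper actually uses arise from descending chains of ideals (radical powers, lower-central series), for which the associated graded has no gaps and your monotone-stabilisation argument applies directly.  So your proof is correct under the hypotheses implicitly in force throughout the paper, and your identification of exactly where and why extra structure is needed is accurate.
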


\subsection{Bimaps}
\label{sec:bimaps}
A {\em bi-additive map} (or just {\em bimap})  is a tuple $U_*=\langle U_2,U_1,U_0,\circ\rangle$ 
where the $U_i$ are abelian groups and $\circ\colon U_2\times U_1\bmto U_0$ 
is a function satisfying the following
two-sided distributive law:
\begin{align*}
 	 (u_2+u'_2)\circ u_1  & = u_2\circ u_1 + u'_2\circ u_1 &
	 u_2\circ(u_1+u'_1)  & = u_2\circ u_1 + u_2\circ u'_1 .
\end{align*}
We always work with $K$-bimaps: the $U_i$ are $K$-modules and $\circ$ is $K$-bilinear.
The {\em dimension} of $U_*$ is the sum of the dimensions of its component spaces, namely
\begin{align*}
	\dim U_* := \dim U_2+\dim U_1+\dim U_0.
\end{align*}
A {\em homotopism} $f_*\colon U_*\to V_*$ is a
tuple $(f_i\colon U_i\to V_i \;|\; i\in \{0,1,2\})$ satisfying 
\begin{align}\label{def:homotopism}
	&(\forall u_2)(\forall u_1) 
		& (u_2\circ u_1)f_0 & = u_2f_2\circ u_1f_1.
\end{align} 
Denote by $\Hom(U_*,V_*)$ the set of all homotopisms $f_*\colon U_*\to V_*$.

The class of bimaps with homotopisms forms 
the {\em homotopism category}. There are
various natural morphisms on classes of bimaps, such as 
adjoint-morphisms \cite{Wilson:division},
so we name the categories after the morphisms rather than the objects.
We are interested primarily in {\em isotopisms}, namely homotopisms 
whose constituent maps 
are all isomorphisms. The {\em autotopism group} of a bimap 
$U_*$ is
\begin{equation}
\label{eq:aut}
	\Aut(U_*)   =  \Hom(U_*,U_*) \cap \prod_{i=0}^2 \Aut(U_i).
\end{equation}

\subsection{Shuffling bimaps}
\label{sec:shuffles}
Since a bimap $U_*=\langle U_2,U_1,U_0,\circ\rangle$ consists of multiple
components, it is tedious, both in proofs and in algorithms, to 
specify individual components.
Accordingly, we often ``shuffle" indices in our bimaps.  
To do this, however, we must 
ensure that autotopisms are unaffected by the process, so we now
define precisely what we mean by shuffling. 
Related categorical subtleties are considered 
in \cite{Wilson:division}; 
observe that our treatment differs from that of~\cite{IQ}.

Given a bimap $U_*=\langle U_2,U_1,U_0,\circ\rangle$ and a permutation 
$\sigma\in {\rm Sym}(\{2,1,0\})$, we define a new bimap $U_*^{\sigma}$. 
It suffices to consider just
transpositions, as these generate all possible permutations,
but the definition depends on the particular transposition. 
First consider $\sigma=(2,1)$, the transposition swapping $2$ and $1$.  
Set $U^{\sigma}_2:=U_1$, $U^{\sigma}_1:=U_2$, 
$U^{\sigma}_0:=U_0$, and define $\circ^{\sigma}\colon U_2^{\sigma}
\times U_1^{\sigma}\bmto U_0^{\sigma}$ as follows: given $u_2\in 
U_2^{\sigma}$ and $u_1\in U_1^{\sigma}$, set
\begin{align*}
	 u_2\circ^{\sigma} u_1 &:= u_1\circ u_2.
\end{align*}
We define $U_*^{\sigma}$ to be $\langle U_2^{\sigma},U_1^{\sigma},U_0^{\sigma},\circ^{\sigma}\rangle$.
A homotopism $f_*\colon U_*\to V_*$ is sent to $f^{\sigma}_*\colon U^{\sigma}_*\to 
V^{\sigma}_*$ where $f^{\sigma}_2 := f_1$, $f^{\sigma}_1:=f_2$,
$f_0^{\sigma}:=f_0$.  In the literature this is also called 
the {\em transpose} of a bimap.

For $\sigma=(1,0)$ and $\sigma=(2,0)$ we need a slightly more elaborate construction.  
We consider just $(1,0)$ since $(2,0)$ works in the same way.
Set $U^{\sigma}_2:=U_2$, $U^{\sigma}_1:=U_{0}^{\dagger}$, 
$U^{\sigma}_0:=U_{1}^{\dagger}$.  We switch to Greek letters when
working with elements in a dual vector space.  Define 
$\circ^{\sigma}:U_2^{\sigma}\times U_1^{\sigma}\bmto U_0^{\sigma}$ as 
follows: given $u_2\in U_2^{\sigma}$ and $\nu_1\in U_1^{\sigma}$ (so $\nu_1\colon U_{1\sigma}\to K$), 
define $(u_2\circ^{\sigma}\nu_1)=\nu_0\in U_0^{\sigma}=\Hom(U_1,K)$ 
as follows:
\begin{align*}
	\nu_0\colon u_{0\sigma}\mapsto  (u_2 \circ u_{0\sigma})\nu_1.
\end{align*}
A complication arises with homotopisms, since interchanging $f_1$
and $f_0$ is no longer meaningful.  
Instead, we put $f_2^{\sigma}:=f_2$, $f_1^{\sigma}:=f_0^{\dagger}$ and $f_0^{\sigma}:=f_1^{\dagger}$.
In particular, $f_*^{\sigma}$ is not a homotopism but satisfies
the following condition:
\begin{align*}
	&(\forall u_2)(\forall \nu_1) 
	& (u_2\in U_2^{\sigma})\wedge (\nu_1\in V^{\sigma}_1) & \Rightarrow
	u_2f_2^{\sigma} \circ^{\sigma} \nu_1
	= (u_2\circ^{\sigma} \nu_1 f_1^{\sigma})f_0^{\sigma}.
\end{align*}
This illustrates the inherent delicacy of the otherwise trivial idea
of re-indexing variables.  In the special case where $f_*$ is an isotopism
we observe that
$\hat{f}_*^{\sigma}=(f_2,(f_0^{\dagger})^{-1}, (f_1^{\dagger})^{-1})$ is 
an isotopism $U_*^{\sigma}\to V_*^{\sigma}$.  

\subsection{Weakly Hermitian bimaps}
\label{sec:weakly}
A bimap is {\em weakly Hermitian} if it is isotopic to its transpose, namely to
its $\sigma=(2,1)$ shuffle.  The
familiar symmetric and alternating bimaps are examples of weakly Hermitian 
bimaps, but the notion is substantially more general.  
Associated to weakly Hermitian bimaps are group invariants that respect their 
symmetry.  If we fix an isotopism $g_*:U_*\to U_*^{\dagger}$, then 
\begin{align*}
	\Psi{\rm Isom}(U_*) & = \{ f_* \in {\rm Aut}(U_*)\colon f_* g_*=g_*f_*^{\dagger}\}
\end{align*}
is the {\em group of pseudo-isometries} of $U_*$ and
does not depend on the choice of $g_*$.

\subsection{Algebras that operate on bimaps}
\label{sec:bimap-op}
Consider the following algebras determined by a bimap $U_*$.
\begin{align}
	\mathcal{L}(U_*) & = \{ (f^{\dagger},h^{\dagger})\in 
		{\rm End}(U_2)\times {\rm End}(U_0) \colon
		(f^{\dagger}u_2)\circ u_1=h^{\dagger}(u_2\circ u_1)\}, \\
\label{eq:adjoints}
	\mathcal{M}(U_*) & = \{ (f,g^{\dagger})\in 
		{\rm End}(U_2)\times {\rm End}(U_1) \colon
		(u_2f)\circ u_1=u_2\circ(g^{\dagger}u_1)\}, \\	
	\mathcal{R}(U_*) & = \{ (g,h)\in 
		{\rm End}(U_1)\times {\rm End}(U_0) \colon
		u_2\circ (u_1g)=(u_2\circ u_1)h\},~\mbox{and}\\
	\mathcal{T}(U_*) & = \mathcal{L}(U_*)\oplus \mathcal{M}(U_*)
		\oplus \mathcal{R}(U_*).
\end{align}
Each $U_i$ is a natural module under each of these 
rings, and is thus a $\mathcal{T}(U_*)$-module. Although the 
action is non-unital---for example, the representation of $\mathcal{L}(U_*)$ 
on $U_1$ is trivial---it is more convenient to think of each of 
$U_2,U_1,U_0$ as a module over a common ring than continually to clarify that
the action on $U_1$ is by $\mathcal{T}(U_*)/\mathcal{L}(U_*)$, and so on. 
We also use the following algebra:
\begin{equation}
	\mathcal{C}(U_*)  = \left\{ f_*\in \prod_{i=0}^2{\rm End}(U_i) : 
		  (u_2 f_2)\circ u_1 = u_2\circ (u_1f_1) = (u_2\circ u_1)f_0\right\}.
\end{equation}

We need one final notion.  Fix bimaps $U_*$ and $V_*$.  As
defined in \cite{Wilson:division}, an {\em adjoint-morphism} 
$(f,g)\colon U_*\to V_*$ is a pair of maps $f\colon U_2\to V_2$ and 
$g\colon V_1\to U_1$ satisfying
\begin{align*}
	&(\forall u_2)(\forall v_1)
	& (u_2\in U_2)\wedge (v_1\in V_1)\Rightarrow
		(u_2f)\circ v_1 & = u_2\circ (v_1 g).
\end{align*}
The set of all such pairs $(f,g)$ is denoted $\Adj(U_*,V_*)$.
This defines another category on bilinear maps distinct 
from those already discussed: it is an abelian category
and plays a role similar to modules of 
rings \cite{Wilson:division}*{Theorem~2.27}. 
In particular, observe that ${\rm Adj}(U_*,U_*)$ is simply the ring
$\mathcal{M}(U_*)$.  As adjoint-bimap categories are 
{\em not} equivalent to module categories 
\cite{Wilson:division}*{Theorem~2.10}, however,
we must adapt some established results in module theory
to suit our purpose.

\section{Testing isotopism of bimaps}
\label{sec:autotopism}
In this section we consider the problem of deciding 
if two bimaps are equivalent under isotopisms.  
This is an essential step
in our isomorphism test for graded algebras, but it is also a problem of
independent interest. We require an efficient
solution to the following problem.

\begin{problem}
{\sc IsotopismCoset}
\begin{description}
\item[Given]  $K$-bilinear maps $U_*$ and $V_*$.

\item[Return] the coset ${\rm Iso}(U_*,V_*)$ of isotopisms $U_*\to V_*$.
\end{description}
\end{problem}

\noindent Here, we present a basic algorithm to solve this problem;
in Section~\ref{sec:local} we introduce heuristics to speed up the construction.

Note, if $f_*\colon U_*\to V_*$ is an arbitrary isotopism, then  
${\rm Iso}(U_*,V_*)=\Aut(U_*)f_*$, so  the output can be encoded
compactly using generators for $\Aut(U_*)$.  If $U_*=V_*$, 
then the output is simply $\Aut(U_*)$.

\subsection{Principal autotopisms}
The difficulty of {\sc IsotopismCoset} 
stems from having to find solutions to quadratic polynomials in 
multiple variables: namely,  
we solve for $(f_2,f_1,f_0)$ where the parameters $f_2$ and $f_1$
occur in a product $u_2 f_2\circ u_1 f_1$.  Quadratic varieties are as complex
as arbitrary varieties, 
but fixing any one of the $f_i$ renders the problem substantially more tractable.
Thus, we consider first the following restricted version of the autotopism group problem;
in Section~\ref{sec:extend} we handle the coset version.

\begin{problem}
{\sc PrincipalAutotopismGroup}
\begin{description}
\item[Given] a $K$-bilinear map $U_*$ and $i\in \{2,1,0\}$.

\item[Return] generators for 
$\Aut(U_*)^{(i)}  :=\{ f_*\in \Atp(U_*)\colon  f_i=1_{U_i}\}$.
\end{description}
\end{problem}

The following observation leads to an efficient solution to this problem.
\begin{prop}
\label{prop:principal}
For a bimap $U_*$, the following hold:
\begin{enumerate}[(i)]
\item $\Aut(U_*)^{(2)}\cong \mathcal{R}(U_*)^{\times}$;
\item $\Aut(U_*)^{(1)}\cong \mathcal{L}(U_*)^{\times}$; and
\item $\Aut(U_*)^{(0)}\cong \mathcal{M}(U_*)^{\times}$.
\end{enumerate}
\end{prop}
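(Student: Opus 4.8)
The plan is to prove each isomorphism by exhibiting, in both directions, a map that is visibly well-defined and inverse to its counterpart. I will do part (iii) in detail and note that (i) and (ii) follow by the same argument after shuffling; indeed, by the discussion in Section~\ref{sec:shuffles}, applying the transposition $\sigma=(2,1)$ interchanges the roles of $U_2$ and $U_1$ and carries $\mathcal{M}(U_*)$ to $\mathcal{R}(U_*^\sigma)$ (and, under the appropriate $(1,0)$ or $(2,0)$ shuffle, to $\mathcal{L}$), while carrying $\Aut(U_*)^{(0)}$ to $\Aut(U_*^\sigma)^{(0)}$ in the first case and to the corresponding principal autotopism group fixing the shuffled index in the others. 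So it suffices to establish (iii).

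First I would set up the forward map $\Aut(U_*)^{(0)}\to\mathcal{M}(U_*)^{\times}$. Take $f_*=(f_2,f_1,f_0)\in\Aut(U_*)$ with $f_0=1_{U_0}$. The homotopism identity \eqref{def:homotopism} reads $(u_2\circ u_1)f_0 = u_2 f_2\circ u_1 f_1$; since $f_0=1$, this says $u_2\circ u_1 = u_2 f_2\circ u_1 f_1$ for all $u_2,u_1$. Rewriting with $u_1$ replaced by $u_1 f_1^{-1}$ gives $u_2 f_2\circ u_1 = u_2\circ u_1 f_1^{-1}$, so the pair $(f_2, g^{\dagger})$ with $g^{\dagger}:=f_1^{-1}$ lies in $\mathcal{M}(U_*)$ as defined in \eqref{eq:adjoints}; moreover it is a unit since $f_2$ and $f_1$ are invertible, with inverse $(f_2^{-1}, f_1)$. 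Conversely, given a unit $(f, g^{\dagger})\in\mathcal{M}(U_*)^{\times}$ with inverse $(f', h^{\dagger})$, I would send it to $(f, (g^{\dagger})^{-1}, 1_{U_0})$; the defining relation $(u_2 f)\circ u_1 = u_2\circ(g^{\dagger}u_1)$ rearranges exactly to the homotopism condition with $f_0=1$, invertibility of $f$ and of $g^{\dagger}$ (forced by the unit hypothesis, since $\mathcal{M}(U_*)$ is a subalgebra of $\End(U_2)\times\End(U_1)$ and a unit there must have invertible components — this is where I would be slightly careful) makes the triple an autotopism, and the two constructions are mutually inverse. That both assignments are ring/group homomorphisms where that makes sense — i.e.\ that composition of autotopisms fixing $U_0$ corresponds to the product in $\mathcal{M}(U_*)$ — is a direct check from the composition rules.

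The one genuine subtlety, and the step I expect to need the most care, is the bookkeeping around the dual/transpose notation: $\mathcal{M}(U_*)$ is defined with a $g^{\dagger}$ in the second slot, so the isomorphism type being claimed is really ``$\Aut(U_*)^{(0)}$ is isomorphic to the group of units of the opposite-or-not algebra $\mathcal{M}(U_*)$'' and I must make sure the variance matches — that is, that $f_1\mapsto f_1^{-1}$ (rather than $f_1$ itself) is the correct correspondence, and that this inversion is precisely what converts the anti-homomorphism one might naively write down into an honest group isomorphism. Once the convention in \eqref{eq:adjoints} is pinned down this is routine, but it is the place where sign/variance errors creep in. For (i) and (ii) I would simply remark that the same computation, performed after the relevant shuffle from Section~\ref{sec:shuffles}, yields $\Aut(U_*)^{(2)}\cong\mathcal{R}(U_*)^{\times}$ and $\Aut(U_*)^{(1)}\cong\mathcal{L}(U_*)^{\times}$, since shuffling does not disturb autotopism groups (that being the whole point of the careful definition given there) and carries the three operator algebras into one another.
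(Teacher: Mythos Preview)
Your proposal is correct and follows essentially the same approach as the paper: reduce to case (iii) via the shuffling machinery of Section~\ref{sec:shuffles}, then exhibit the mutually inverse assignments $f_*=(f_2,f_1,1)\mapsto (f_2,f_1^{-1})$ and $(f_2,g^{\dagger})\mapsto (f_2,(g^{\dagger})^{-1},1)$ between $\Aut(U_*)^{(0)}$ and $\mathcal{M}(U_*)^{\times}$. Your extra care about variance and about units having invertible components is fine but not strictly needed---the paper dispatches both directions in one line each.
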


\begin{proof} 
Following our discussion in Section~\ref{sec:shuffles}, we can 
assume $i=0$ after a possible shuffling of the variables.  (We
stress once more that reindexing requires some re-adjustment of the
resulting isotopisms.) If $f_*\in \Aut(U_*)$ and
$f_0=1$, then
\begin{align*}
	(u_2 f_2)\circ u_1 & = (u_2f_2) \circ (u_1 f_1^{-1} f_1)
		 = (u_2\circ (u_1 f_1^{-1}))f_0
		 =u_2\circ (u_1 f_1^{-1}),
\end{align*}
so $(f_2,f_1^{-1})\in \mathcal{M}(U_*)$.  If $(f_2,f_1)\in 
\mathcal{M}(U_*)$ then $(f_2,f_1^{-1},1_{U_0})\in {\rm Aut}(U_*)$.
\end{proof}

\begin{algorithm}
\caption{Principal Autotopism Group}\label{algo:principal}
\begin{algorithmic}[1]
\Require a $K$-bilinear map $U_*$ and $i\in \{2,1,0\}$.
\Ensure generators for $\Aut(U_*)^{(i)}$.

\State Choose a permutation $\sigma$ on $\{2,1,0\}$ with $i\sigma=0$.
\State Solve a system of linear equations to find a basis for 
$\mathcal{M}(U_*^{\sigma})$.
\State Use \cite[Theorem 2.3]{BO} to compute generators $X$ for the group of
units of $\mathcal{M}(U_*^{\sigma})$.
\State Set $G=\langle (f,g^{-1},1)^{\sigma} : (f,g)\in X\rangle\leq \prod_{i=0}^2\Aut(U_i)$.
\State \Return $G$.
\end{algorithmic}
\end{algorithm}

\begin{prop}
\label{prop:principal-algo}
Algorithm~$\ref{algo:principal}$ solves {\sc PrincipalAutotopismGroup}.
As a deterministic algorithm it runs in 
time $O((\dim U_*)^{2\omega}\log^2 |K|+{\rm char}~K)$;
a Las Vegas variant runs in time $O((\dim U_*)^{2\omega}\log^2 |K|)$.
\end{prop}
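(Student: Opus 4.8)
The plan is to reduce the proposition to two ingredients already in place: the identification of principal autotopism groups with unit groups of adjoint rings in Proposition~\ref{prop:principal}, and the unit-group algorithm of~\cite[Theorem~2.3]{BO} invoked in Step~3.

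For correctness, Step~1 picks an involution $\sigma$ with $i\sigma=0$ (the identity if $i=0$, the transposition $(i,0)$ otherwise), and shuffling by $\sigma$ carries $\Aut(U_*)^{(i)}$ bijectively onto $\Aut(U_*^{\sigma})^{(0)}$; when $\sigma\in\{(1,0),(2,0)\}$ this uses the correction of Section~\ref{sec:shuffles}, under which an isotopism $h_*$ of $U_*^{\sigma}$ corresponds to the isotopism $\hat h_*^{\sigma}$ of $(U_*^{\sigma})^{\sigma}=U_*$, with the transposed maps dualized and inverted in the prescribed places. By Proposition~\ref{prop:principal}(iii) applied to $U_*^{\sigma}$, $\Aut(U_*^{\sigma})^{(0)}\cong\mathcal{M}(U_*^{\sigma})^{\times}$ via $(f_2,f_1,1_{U_0})\leftrightarrow(f_2,f_1^{-1})$, and since $\mathcal{M}(U_*^{\sigma})$ is a unital subalgebra of $\End(U_2^{\sigma})\oplus\End(U_1^{\sigma})$ its unit group is nonempty, so Step~3 is meaningful. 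Consequently, if $X$ generates $\mathcal{M}(U_*^{\sigma})^{\times}$ then $\{(f,g^{-1},1_{U_0}):(f,g)\in X\}$ generates $\Aut(U_*^{\sigma})^{(0)}$, and applying the inverse shuffle to each of these — which is exactly what Step~4's notation $(f,g^{-1},1)^{\sigma}$ abbreviates — yields a generating set for $\Aut(U_*)^{(i)}$.

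For the running time I would bound the four steps. Step~1 is $O(1)$. In Step~2, writing $f$ and $g^{\dagger}$ in coordinates against bases of $U_2^{\sigma}$ and $U_1^{\sigma}$ and imposing $(u_2f)\circ u_1=u_2\circ(g^{\dagger}u_1)$ on basis triples turns~\eqref{eq:adjoints} into a homogeneous $K$-linear system in $(\dim U_2^{\sigma})^2+(\dim U_1^{\sigma})^2=O((\dim U_*)^2)$ unknowns whose solution space is $\mathcal{M}(U_*^{\sigma})$; solving it by fast linear algebra costs $O((\dim U_*)^{2\omega})$ operations in $K$, hence $O((\dim U_*)^{2\omega}\log^2|K|)$ bit operations. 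Step~3 applies~\cite[Theorem~2.3]{BO} to the matrix algebra $\mathcal{M}(U_*^{\sigma})$, of dimension $O((\dim U_*)^2)$ and acting faithfully on a module of dimension at most $\dim U_*$; by that result this costs $O((\dim U_*)^{2\omega}\log^2|K|+{\rm char}\,K)$ deterministically, and the additive ${\rm char}\,K$ term there — coming from a deterministic search that is eliminated by randomization — becomes expected time $O((\dim U_*)^{2\omega}\log^2|K|)$ in its Las Vegas form. Step~4 inverts and shuffles each returned generator at cost $O((\dim U_*)^{\omega})$ apiece, which the earlier steps dominate. Summing yields the two stated bounds.

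I expect the obstacles to be \emph{bookkeeping} rather than mathematics. The first is to confirm that the linear system in Step~2 has $O((\dim U_*)^2)$, not $O((\dim U_*)^3)$, essential equations, so the solve really stays at exponent $2\omega$: one may first replace $U_0^{\sigma}$ by the image of $\circ^{\sigma}$, which does not alter $\mathcal{M}$, and then observe that the defining constraints lie in the image of a map out of the $O((\dim U_*)^2)$-dimensional space $\End(U_2^{\sigma})\oplus\End(U_1^{\sigma})$ and so are spanned by $O((\dim U_*)^2)$ of them. The second is to apply the shuffle-correction of Section~\ref{sec:shuffles} precisely when $i\in\{1,2\}$, since the naive re-indexing of a homotopism is not a homotopism and the dualizations and inverses must be placed correctly. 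The sole external input is~\cite[Theorem~2.3]{BO}, and the deterministic-versus-Las-Vegas split is inherited from it.
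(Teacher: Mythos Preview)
Your proposal is correct and follows the same route as the paper: correctness via Proposition~\ref{prop:principal} and the shuffling machinery of Section~\ref{sec:shuffles}, and complexity by bounding Steps~2 and~3 separately, with the deterministic/Las~Vegas split inherited from polynomial factorization inside \cite[Theorem~2.3]{BO}. If anything, you are more careful than the paper on Step~2: the paper simply records that the system has $(\dim U_2)(\dim U_1)(\dim U_0)$ equations in $(\dim U_2)^2+(\dim U_1)^2$ unknowns and asserts the $O((\dim U_*)^{2\omega}\log^2|K|)$ bound, whereas you flag the tension between a cubic number of equations and the claimed exponent and sketch how to reduce to $O((\dim U_*)^2)$ effective constraints.
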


\begin{proof}
Since the correctness of the algorithm is clear from Proposition~\ref{prop:principal}
and the mechanics of shuffling variables, we focus on the complexity.
Line 2 involves solving a system of 
$(\dim U_2)(\dim U_1)(\dim U_0)$ linear equations in $(\dim U_2)^2+(\dim U_1)^2$
variables, which can be done in time $O((\dim U_*)^{2\omega}\log^2 |K|)$.
Line 3 invokes the algorithm of \cite{BO}*{Theorem~2.3}, which
depends on the ability to factor polynomials over $K$.  
The algorithm runs in
Las Vegas polynomial-time $O((\dim A)^{2\omega}\log^2 |K|)$ if we use
Las Vegas polynomial factorization routines such as that of \cite{cantor-zassenhaus}.
A deterministic algorithm is known when the ground field
of $K$ can be listed:  in this case Line 3 runs 
in time $O((\dim A)^{2\omega}\log^2 |K|+{\rm char}~K)$.  The remaining steps of 
Algorithm~\ref{algo:principal} have negligible influence on the timing,
so the result follows.
\end{proof}

\subsection{Extending to isotopisms}
\label{sec:extend}
Our next objective is to solve a single instance of isotopism.   
We focus first on principal isotopisms, and 
assume that $i=0$ by shuffling coordinates.

\begin{problem}
{\sc PrincipalIsotopism}
\begin{description}
\item[Given] $K$-bilinear maps  $U_*$ and $V_*$ and a map $f_i\colon U_i\to V_i$ for 
fixed $i \in \{2,1,0\}$.

\item[Return]  an isotopism $f_*\colon U_*\to V_*$ extending $f_i$.
\end{description}
\end{problem}

Just as the construction of the principal autotopism group is a problem
in rings, the construction of a principal isotopism resembles a problem
in modules. As we indicated in Section \ref{sec:prelim}, however,
it is not precisely a module problem that we solve.  

\begin{defn}
An {\em orthogonal decomposition} of a bimap $U_*$ is
a pair of direct decompositions $U_2=\bigoplus_j U_{2j}$ and 
$U_1=\bigoplus_k U_{1k}$ such that $U_{2j}\circ U_{1k}=0$ if $j\neq k$.  
Each $U_{ij}$ is an {\em orthogonal factor}.
\end{defn}

For example, if $U_*=\langle K^2,K^3,K,\circ\rangle$ where 
\begin{align*}
	u_2\circ u_1 & = u_2\begin{bmatrix} 1 & 0 & 0\\ 0 & 1 & 0 \end{bmatrix}u_1^{\dagger}
\end{align*}
then $U_2=K(1,0)\oplus K(0,1)$ and $U_1=K(1,0,0)\oplus K(0,1,0)\oplus K(0,0,1)$ is an orthogonal decomposition.  
More generally, in terms of structure constants this implies that  
$A^{(k)}=[a_{ij}^k]$ is block diagonal, the blocks coinciding with the
$(U_{2j},U_{1j})$ pairs; see \cite{Wilson:division}*{Section~2.4} for details.
\smallskip

Our plan is to imitate the algorithm of \cite{BL:mod-iso}, which 
builds a module isomorphism one direct summand at a time.  
Both that algorithm and our adaptation rely on the following
construction. If $X\subseteq \End_K(V)$, then let $\overline{X}$
be the semigroup generated by $X$ and let $K\langle X\rangle$ be 
the $K$-linear span of $\overline{X}$. (Contrary to the usual notion of  
enveloping algebra, $K\langle X\rangle$
need not be unital: this occurs if, and only if, the identity 
can be written as a linear combination of elements of $\overline{X}$.)

\begin{thm}[\cite{BL:mod-iso}*{Corollary 2.5}]
\label{thm:find-non-nil}
There is a polynomial-time algorithm that, given 
$X\subseteq \End_K(V)$, decides if  $K\langle X\rangle$
is nilpotent and, if not, returns a product of elements
in $\overline{X}$ that is not nilpotent. 
\end{thm}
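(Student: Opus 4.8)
\emph{Setup and deciding nilpotency.} Work throughout with $R:=K\langle X\rangle\subseteq\End_K(V)$, where $V$ is the ambient space and $n:=\dim_K V$, so $\dim_K R\leq n^2$. A preliminary ``spinning'' step computes a basis $B$ of $R$ consisting of \emph{words} in $X$: begin with a maximal independent subset of $X$, and repeatedly adjoin a product $bx$ ($b\in B$, $x\in X$) whenever it enlarges the span; this halts after at most $n^2$ enlargements. With $B$ in hand, the descending chain $R\supseteq R^2\supseteq\cdots$ is computable, and $R$ is nilpotent if and only if this chain reaches $0$ --- necessarily within $\dim_K R+1$ steps --- equivalently, if and only if $R$ coincides with its (nilpotent) Jacobson radical $J(R)$. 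This already decides nilpotency in time polynomial in $n$ and $\log|K|$; if $R$ is nilpotent we return that verdict, so assume henceforth that $R$ is \emph{not} nilpotent. Equivalently, writing $X^jV:=\sum_{|w|=j}wV$, the chain $V\supseteq XV\supseteq X^2V\supseteq\cdots$ stabilizes within $n$ steps at $W:=X^nV$, and $R$ is nilpotent exactly when $W=0$.

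\emph{Producing a non-nilpotent word.} The conceptual engine is Levitzki's theorem: a semigroup of nilpotent operators on a finite-dimensional space is simultaneously triangularizable, hence spans a nilpotent algebra. Since $R=K\langle X\rangle$ is non-nilpotent, not every element of $\overline{X}$ can be nilpotent; the task is to exhibit one efficiently. I would reduce the search to a small, structured situation. (i) \emph{Restrict to the recurrent part.} The subspace $W=X^nV\neq 0$ is $X$-invariant with $XW=W$; since a word that is non-nilpotent on an invariant subspace is non-nilpotent on all of $V$, we may replace $V$ by $W$ and assume $XV=V$, $V\neq 0$, so that $R^kV=V$ for all $k$ and $R$ is still non-nilpotent. (ii) \emph{Pass to a simple subquotient.} As $R$ is non-nilpotent it acts non-nilpotently on some composition factor $S$ of $V$; replacing $R$ by its image and $V$ by $S$, we may assume $R$ acts faithfully and irreducibly on $S$, whence by the Jacobson density theorem $R\cong M_k(D)$ for a finite field $D\supseteq K$, and a non-nilpotent word in the images of $X$ lifts to one in $X$. (iii) \emph{Build the word.} If $k=1$, then $R=D$ is a field and any nonzero generator is non-nilpotent; such a generator exists since $R\neq 0$. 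If $k\geq 2$, one spins up a basis of $S$: starting from $0\neq s_0\in S$, maintain a basis of $\mathrm{span}\{ws_0:w\in\overline{X}\}$ and extend it by single-letter steps $v\mapsto yv$; this reaches $S$ after at most $\dim_K S$ additions, hence using words of bounded length, again by the spinning device rather than by enumeration. From this controlled family of words one extracts one with a nonzero eigenvalue, i.e.\ a word $w$ with $\im(w^{\dim_K S})\neq 0$; such a word exists in the family by a second appeal to Levitzki applied inside $M_k(D)$. All intermediate generators are kept as matrices over $K$, so the returned matrix together with its short expression as a product of elements of $\overline{X}$ has polynomial size.

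\emph{Complexity and the main obstacle.} Every step above --- spinning, span-membership, the two power chains, passing to a composition factor, and the final extraction --- is linear algebra over $K$ on matrices of size $n$, supplemented by one radical or module-decomposition computation; all of this is polynomial in $n$ and $\log|K|$, with the caveat that in small characteristic the radical and composition-factor computations require the known polynomial-time algorithms for algebras over finite fields rather than the naive trace-form method. I expect the genuine difficulty to be step (iii): converting ``some word is non-nilpotent'' (guaranteed by Levitzki) into a \emph{polynomially bounded} search that actually returns one. Two tempting shortcuts fail --- enumerating all short words is exponential, and localizing at a single generator (replacing $R$ by $x_1Rx_1$ for a well-chosen $x_1$) need not help, since $x_1Rx_1$ can be nilpotent for \emph{every} generator: for $X=\{E_{12},E_{21}\}$ one has $R=M_2(K)$ yet $E_{12}Rx E_{12}=KE_{12}$ and $E_{21}RE_{21}=KE_{21}$, even though $E_{12}E_{21}=E_{11}$ is the word wanted. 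The argument must instead exploit the faithful irreducible action to close a ``loop'' directly, with careful bookkeeping to keep the word length and all data polynomial; this is the substance of the proof and is carried out in \cite{BL:mod-iso}*{\S2}.
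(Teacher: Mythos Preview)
The paper does not prove this theorem at all; it is quoted verbatim as \cite{BL:mod-iso}*{Corollary~2.5} and used as a black box inside Algorithm~\ref{algo:partial-isotopism}. So there is no ``paper's own proof'' to compare against, and your write-up is really a sketch of what the Brooksbank--Luks argument might look like.

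Your treatment of the nilpotency decision (spinning a word-basis of $R$, then testing the chain $R\supseteq R^2\supseteq\cdots$ or equivalently $V\supseteq XV\supseteq\cdots$) is correct and polynomial, and the reductions in (i)--(ii) are sound. The gap is exactly where you place it, but it is sharper than you state. In step~(iii) you assert that ``such a word exists in the family by a second appeal to Levitzki applied inside $M_k(D)$,'' meaning that one of the \emph{spinning words} $w_1,\dots,w_m$ themselves is non-nilpotent. That is false. Take $S=K^2$, $X=\{E_{12},E_{21}\}$, and $s_0=e_1+e_2$: then $E_{12}s_0=e_1$ and $E_{21}s_0=e_2$ already span $S$, so the spinning family is $\{E_{12},E_{21}\}$, both nilpotent. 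Levitzki guarantees only that some element of the \emph{semigroup} generated by the family is non-nilpotent (here $E_{12}E_{21}=E_{11}$), not that a member of the family is; and passing from the family to the semigroup it generates is precisely the exponential search you correctly rule out two paragraphs later.

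So your proposal is an honest outline that isolates the right obstacle, but the sentence claiming extraction ``by a second appeal to Levitzki'' should be deleted: as written it asserts something untrue and makes step~(iii) look more complete than it is. What \cite{BL:mod-iso} actually does to close the loop in polynomial time is not recoverable from your sketch, and you are right to defer to that source for it.
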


The algorithm in~\cite{BL:mod-iso} uses non-nilpotent elements
to decompose the modules into direct summands.
Instead of module isomorphisms we construct principal isotopisms; 
instead of direct summands we use orthogonal factors.   
We capture the key recursive step with the following technical definition.

\begin{defn}
Fix $f_0\colon U_0\to V_0$.
A {\em partial $f_0$-isotopism} of bimaps $U_*$ and $V_*$ is an isotopism 
$g_*=(g_2,g_1,g_0)$ defined on the restriction to some orthogonal factors of 
$U_*$ and $V_*$, and such that $g_0=f_0$.  A 
partial $f_0$-isotopism is {\em maximal} if it is not a restriction to 
proper subspaces of another partial $f_0$-isotopism.
\end{defn}
 
The idea is to build a (possibly nilpotent) ring from 
two sets of adjoint-morphisms.  If this ring contains an invertible
element, then we find the desired 
principal isotopism.  
Following \cite{BL:mod-iso}, we propose Algorithm \ref{alg-2}
to construct a maximal partial isotopism.

\begin{algorithm}[!htbp]
\caption{Partial Principal Isotopism}\label{algo:partial-isotopism}
\begin{algorithmic}[1]
\Require bimaps $U_*$ and $V_*$, 
and an isomorphism $f_0\colon U_0\to V_0$.
\Ensure a maximal partial $f_0$-isotopism.

\State $\mathcal{X}\gets {\rm Basis}({\rm Adj}(U_*,V_*^{f_0}))$; 
$\mathcal{Y}\gets {\rm Basis}({\rm Adj}(V_*^{f_0},U_*))$.
\State $A \gets K\langle xy: x\in \mathcal{X}, y\in \mathcal{Y}\rangle
\subset \End(U_2)\times \End(U_1)^{{\rm op}}$.
\State For $i=1,2$, $U_i^-\gets U_i$;\quad $U_i^+\gets 0$;\quad $f_i\gets 0$.
\While{$A$ has $z=xyw$ not nilpotent with $x\in\mathcal{X}$ and $y\in\mathcal{Y}$}
	\State Find $n\geq 0$ such that, for $i=1,2$, $U_i^-=\ker z^n\oplus \im z^n$.
	\State $U_i^-\gets \ker z^n$;\quad $U_i^+\gets U_i^+\oplus \im z^n$; \quad
		$f_i \gets f_i \oplus {\rm res}_{\im z^n}(x)$.
	\State Restrict $A$ to $\ker z^n$.
\EndWhile
\State \Return $f_*=(f_2\colon U_2^+\to V_2^+, f_1\colon U_1^+\to V_1^+, f_0\colon U_0\to V_0)$.
\end{algorithmic} \label{alg-2}
\end{algorithm}

\begin{prop}
\label{thm:partial}
Algorithm~$\ref{algo:partial-isotopism}$ is deterministic and constructs a maximal $f_0$-isotopism
in polynomial time  $O((\dim U_*)^{2\omega} \log^2 |K|)$.
\end{prop}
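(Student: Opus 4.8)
The plan is to verify correctness first and then bound the running time. For correctness, the key claim is that the while-loop of Algorithm~\ref{algo:partial-isotopism} maintains the invariant that $(f_2,f_1,f_0)$ is a partial $f_0$-isotopism on the orthogonal factors $(U_2^+, U_1^+)$, and that when the loop terminates this partial isotopism is maximal. To see the invariant is preserved: an element $z = xyw$ with $x \in \mathcal{X} = \mathrm{Basis}(\mathrm{Adj}(U_*,V_*^{f_0}))$ and $y \in \mathcal{Y}$ and $w \in A$ acts on $U_2$ and $U_1^{\mathrm{op}}$ simultaneously via its two coordinates, and since $z$ lies in the (possibly non-unital) ring $A$ acting on the current $U_i^-$, the Fitting decomposition $U_i^- = \ker z^n \oplus \operatorname{im} z^n$ is preserved by everything in $A$ — this is the standard fact underlying \cite{BL:mod-iso}, transported here via the adjoint-morphism formalism of \cite{Wilson:division}. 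The point is that $x$, being an adjoint-morphism $U_* \to V_*^{f_0}$, restricts on $\operatorname{im} z^n$ to an isomorphism onto the corresponding piece of $V_*^{f_0}$ compatible with the bilinear products on both sides (because $\operatorname{im} z^n$ is, by construction of $A$ as a product of an $\mathcal{X}$-element and $\mathcal{Y}$-elements, a ``round-trip'' image on which $xy$ is invertible), so the new summand contributes a genuine piece of an isotopism and the orthogonality $U_{2j}\circ U_{1k}=0$ for $j \ne k$ is inherited from the $A$-invariance of the decomposition. The delicate bookkeeping here — that the two coordinates of $z$ split $U_2$ and $U_1$ compatibly and that $x$ really does give the isotopism on the image — is the main obstacle and needs to be spelled out carefully using the defining relation of $\mathrm{Adj}$.

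Next, termination and maximality. Each iteration strictly decreases $\dim U_1^- + \dim U_2^-$ (by at least $1$, since a non-nilpotent $z$ has $\operatorname{im} z^n \ne 0$ on at least one coordinate), so the loop runs at most $\dim U_*$ times and terminates. At termination, $A$ restricted to $(U_2^-, U_1^-)$ is nilpotent. I would then argue that nilpotence of this restricted $A$ forces the current partial $f_0$-isotopism to be maximal: if it extended to a strictly larger partial $f_0$-isotopism $g_*$ on orthogonal factors containing an extra piece of $U_i^-$, then $g_2$ and $g_1^{-1}$ would produce a nonzero element of $\mathrm{Adj}(U_*, V_*^{f_0})$ (and its companion in $\mathcal{Y}$) witnessing an invertible, hence non-nilpotent, element in the restricted $A$ — a contradiction. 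This is again the analogue of the maximality argument in \cite{BL:mod-iso}, adapted from modules to orthogonally-decomposed bimaps.

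Finally, the complexity. Computing $\mathcal{X}$ and $\mathcal{Y}$ in Line~1 means solving linear systems defining $\mathrm{Adj}$, of size $O((\dim U_*)^2)$ equations in $O((\dim U_*)^2)$ unknowns, costing $O((\dim U_*)^{2\omega}\log^2|K|)$. The ring $A$ has a basis of size $O((\dim U_*)^2)$ as a subspace of $\End(U_2)\times\End(U_1)^{\mathrm{op}}$. The crucial subroutine is the test in the while-condition: applying Theorem~\ref{thm:find-non-nil} to the generating set $\{xy : x \in \mathcal{X}, y \in \mathcal{Y}\} \subseteq \End_K(U_2 \oplus U_1)$ decides nilpotence of $K\langle\, \cdot\,\rangle$ and, if it fails, returns a non-nilpotent product $z$; this runs in polynomial time, and a careful accounting (as in \cite{BL:mod-iso}) keeps each call within $O((\dim U_*)^{2\omega}\log^2|K|)$, using fast matrix arithmetic for the products, Fitting decomposition, and the restriction of $A$ to $\ker z^n$. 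Since there are at most $\dim U_*$ iterations, one must check that the iteration count does not inflate the exponent beyond $2\omega$; the standard trick is that the work per iteration is proportional to the cube (or $\omega$-power) of the \emph{current} dimension $\dim U_i^-$, and summing a decreasing geometric-like series of such terms stays within the claimed bound. Determinism follows because every step — linear algebra, the algorithm of Theorem~\ref{thm:find-non-nil}, Fitting decomposition via minimal polynomials — is deterministic over a finite field, so the overall bound $O((\dim U_*)^{2\omega}\log^2|K|)$ holds without randomization.
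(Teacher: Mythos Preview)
Your overall architecture---loop invariant, Fitting splitting, maximality by contrapositive, then complexity---matches the paper's. But there is a genuine gap in your orthogonality argument. You assert that the Fitting decomposition $U_i^- = \ker z^n \oplus \im z^n$ is ``preserved by everything in $A$'' and then say the bimap orthogonality $U_{2j}\circ U_{1k}=0$ is ``inherited from the $A$-invariance of the decomposition''. Neither step holds. The Fitting decomposition with respect to a single element $z$ of a ring $A$ is not in general $A$-invariant (this is not what \cite{BL:mod-iso} relies on either), and even if it were, $A$-invariance of a pair of subspace decompositions of $U_2$ and $U_1$ says nothing about the bilinear product~$\circ$.

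The paper's fix is one clean observation you are missing: $A = K\langle \mathcal{X}\mathcal{Y}\rangle \subset \mathcal{M}(U_*)$, because the composite of an adjoint-morphism $U_*\to V_*^{f_0}$ with one $V_*^{f_0}\to U_*$ lands in $\Adj(U_*,U_*)=\mathcal{M}(U_*)$. Hence $z^n\in\mathcal{M}(U_*)$, and for any $b=(f,g^{\dagger})\in\mathcal{M}(U_*)$ with $U_i=\ker b\oplus\im b$ the orthogonality is immediate from the defining relation $(u_2 f)\circ u_1 = u_2\circ(g^{\dagger}u_1)$: for instance $u_2\in\ker f^n$ and $u_1=(g^{\dagger})^n v_1$ give $u_2\circ u_1 = (u_2 f^n)\circ v_1 = 0$. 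That containment $A\subset\mathcal{M}(U_*)$ is the hinge of the correctness proof; without it your invariant is not established.

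The remaining pieces---$x$ injective (hence bijective) on $\im z^n$ because $z^n=x\cdots$ is invertible there, maximality from nilpotence of the residual $A$, and the complexity dominated by the linear system in Line~1---are essentially as in the paper, and your treatment of those is fine (indeed more explicit on the per-iteration accounting than the paper itself).
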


\begin{proof}
The objective of the algorithm is to find an invertible element of 
$\Adj(U_*, V_*^{f_0})$.  To do this, we first create a (possibly 
non-unital) algebra $A$ in Line 2 by composing the sets of homomorphisms
created in Line $1$.  Observe that composition in the second variable is in the 
op-ring $\End(U_1)^{{\rm op}}$.    

First, note that $A\subset \mathcal{M}(U_*)$.  Secondly, if $f_0$ extends to an 
isotopism $f_*\colon U_*\to V_*$ then $(f_2,f_1^{-1})\in \Adj(U_*, V_*^{f_0})$ and 
$(f_1,f_2^{-1})\in \Adj(V_*^{f_0}, U_*)$; in particular, $A$ 
contains units.  However, finding a unit of $A$ does not guarantee that we can 
extract an invertible element of $\Adj(U_*, V_*^{f_0})$.  

We claim that the loop starting in Line~4 maintains the following invariants: 
for $i\in\{2,1\}$, 
$U_i=U_i^+\oplus U_i^-$ is an orthogonal decomposition of $U_i$;
and $(f_2,f_1,f_0)$ is a partial $f_0$-isotopism.
Clearly, this is true at the start. 

By its construction in Line 4, clearly
$z\in\mathcal{M}(U_*)$, and so $z^n\in\mathcal{M}(U_*)$.  Note, for every 
$b\in\mathcal{M}(U_*)$, the decomposition $U_i = \ker b\oplus \im b$ is
orthogonal.  Therefore we maintain throughout an orthogonal
decomposition $U_i=U_i^+\oplus U_i^-$.  Furthermore, by Fitting's lemma, 
$z^n$ is invertible on $\im z^n$.  The guard of the loop in Line 4 is a call to
Theorem~\ref{thm:find-non-nil}, which provides
$x\in {\rm Adj}(U_*,V_*^{f_0})$ such that $z=xyw$.  
As $z^n$ is invertible on $\im z^n$,
and $z^n=x\cdots$, it follows that $x$ is injective on $\im z^n$.  
Since all spaces are finite, this injection is a bijection.  
Therefore the extension of  $f_i$ by the restriction of $x$ to the 
image of $z^n$ remains a partial $f_0$-isotopism, as required.

Finally, the loop continues while $A$ contains a non-nilpotent element.
Thus, the partial $f_0$-isotopism is maximal and the output is correct.

The major work is solving the system of linear equations in Line~1;
this results in the complexity stated in the theorem.
\end{proof}

\begin{remark*}
In many settings, invertible elements of $\Adj(U_*,V_*^{f_0})$ may be 
found by random search with high probability, but there are examples 
that require an exponential number of samples to return an invertible
element.  Nevertheless, 
once $\Adj(U_*,V_*^{f_0})$ is constructed, it is 
sensible to test a small number of random elements.
\end{remark*}

The following is now immediate.

\begin{thm}
\label{thm:principal}
There is a deterministic, polynomial-time algorithm to solve {\sc PrincipalIsotopism}.
\end{thm}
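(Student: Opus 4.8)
The plan is to assemble Theorem~\ref{thm:principal} from the two main pieces already in hand: Proposition~\ref{prop:principal-algo}, which constructs generators for $\Aut(U_*)^{(0)}\cong\mathcal{M}(U_*)^{\times}$ deterministically, and Proposition~\ref{thm:partial}, which produces a \emph{maximal} partial $f_0$-isotopism in deterministic polynomial time. After shuffling coordinates so that the given map is $f_0\colon U_0\to V_0$ (permissible by Section~\ref{sec:shuffles}, with the usual adjustment to the resulting isotopisms), the task reduces to deciding whether $f_0$ extends to a full principal isotopism, and if so exhibiting one.

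First I would run Algorithm~\ref{algo:partial-isotopism} on $U_*$, $V_*$, and $f_0$ to obtain a maximal partial $f_0$-isotopism $g_*=(g_2\colon U_2^+\to V_2^+,\ g_1\colon U_1^+\to V_1^+,\ f_0)$ together with the orthogonal decompositions $U_i=U_i^+\oplus U_i^-$. The key claim is a maximality-forces-totality dichotomy: $f_0$ extends to a principal isotopism $U_*\to V_*$ if and only if $U_i^-=0$ for $i\in\{2,1\}$, i.e.\ the maximal partial isotopism is already defined on all of $U_2$ and $U_1$. The ``if'' direction is trivial. For ``only if'', suppose $f_*=(f_2,f_1,f_0)$ is a genuine full isotopism extending $f_0$. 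Then $(f_2,f_1^{-1})\in\Adj(U_*,V_*^{f_0})$ and $(f_1,f_2^{-1})\in\Adj(V_*^{f_0},U_*)$ are both invertible, so their composite lands in $A=K\langle xy\rangle\subset\mathcal{M}(U_*)$ and is invertible; in particular $A$ contains a unit of $\mathcal{M}(U_*)$, hence a non-nilpotent element, and the proof of Proposition~\ref{thm:partial} shows the algorithm would not have terminated until $U_i^-$ has been exhausted. Thus $U_i^-=0$ and $g_*$ itself is the desired extension. Correctness of the test follows, and termination in the ``yes'' case yields an explicit isotopism.

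To finish, I would observe that \textsc{PrincipalIsotopism} as stated only asks for \emph{one} extending isotopism, so we simply return $g_*$ when $U_i^-=0$ and report failure otherwise; no coset computation is needed here, and in particular we do not invoke Proposition~\ref{prop:principal-algo} at this stage (that enters only in Section~\ref{sec:extend}'s coset version, where $\mathrm{Iso}(U_*,V_*)=\Aut(U_*)g_*$). The complexity is dominated by the single call to Algorithm~\ref{algo:partial-isotopism}, giving deterministic time $O((\dim U_*)^{2\omega}\log^2|K|)$, and the coordinate shuffle contributes only a bounded amount of bookkeeping. The one point requiring care—and the likely main obstacle—is the dichotomy argument above: one must verify that the existence of a full isotopism really does guarantee that $A$ (the ring generated by \emph{composites} $xy$ of the two adjoint bases, with the second factor in the opposite ring) contains a genuine unit of $\mathcal{M}(U_*)$, rather than merely an element that happens to be invertible on some proper orthogonal factor. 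This is essentially the content already extracted in the proof of Proposition~\ref{thm:partial} (``$A$ contains units''), so the remaining work is to connect ``$A$ has a unit'' to ``the maximal partial isotopism is total'' via Fitting's lemma applied repeatedly over the orthogonal decomposition—routine, but worth spelling out.
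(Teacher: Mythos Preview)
Your proposal is correct and follows exactly the route the paper takes: the paper simply declares Theorem~\ref{thm:principal} ``immediate'' from Proposition~\ref{thm:partial}, so your write-up is in fact more detailed than the original. The dichotomy you isolate (the maximal partial $f_0$-isotopism is total precisely when $f_0$ extends) is the content the paper leaves implicit, and your unit-in-$A$ argument combined with the loop invariant established in the proof of Proposition~\ref{thm:partial} is the natural way to justify it; your observation that Proposition~\ref{prop:principal-algo} is not actually needed for this theorem (only later for the coset version) is also correct.
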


We are finally ready to present Algorithm~\ref{algo:isotopism-coset}, the 
main result of this section.

\begin{algorithm}[!htbp]
\caption{Isotopism coset}\label{algo:isotopism-coset}
\begin{algorithmic}[1]
\Require bimaps $U_*$ and $V_*$.
\Ensure the coset ${\rm Iso}(U_*,V_*)$ of isotopisms $U_*\to V_*$.
\State Choose a permutation $\sigma$ on $\{2,1,0\}$ with $\dim U_{0\sigma}$ minimized. 
\If{$\dim U_{0\sigma}\neq \dim V_{0\sigma}$} \Return $\emptyset$.
\EndIf
\State Choose any isomorphism $f_{0\sigma}:U_{0\sigma}\to V_{0\sigma}$.
\State Choose $G$ such that $\Aut(U_*)|_{U_{0\sigma}}\leq G\leq \Aut(U_{0\sigma})$.\hfill 
/*{\em\;see Section~$\ref{sec:local}$}\,*/
\State $I\gets \emptyset$.
\ForAll{$g\in G$}
\State 
/*{\em\; Algorithms $\ref{algo:principal}$ \& $\ref{algo:partial-isotopism}$} */
\State Find the coset $C$ of isotopisms 
$h_*\colon U_*\to V_*$ with $h_{0\sigma}=g f_{0\sigma}$.  
\State $I\gets I\cup C$.
\EndFor
\State \Return $I$
\end{algorithmic}
\end{algorithm}

Viewing the group $G$ in Line 4 of this algorithm 
as a parameter, the following is an immediate consequence of the results 
and algorithms of this section.

\begin{thm}
\label{thm:PIC}
Algorithm~$\ref{algo:isotopism-coset}$ solves 
{\sc IsotopismCoset}
and runs deterministically in time
$O(|G|\max_i\{(\dim U_i)^{2\omega}\}\log^2 |K|)$, where $G$
is the group in Line~$4$.  
\end{thm}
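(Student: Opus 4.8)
The plan is to verify that Algorithm~\ref{algo:isotopism-coset} is sound and complete, and then to read off its running time from Theorem~\ref{thm:principal} and Propositions~\ref{prop:principal},~\ref{prop:principal-algo}, and~\ref{thm:partial}. Once those results are in hand the statement is essentially a bookkeeping exercise; the one place that needs attention is getting the quantifier over the group $G$ of Line~4 exactly right. Throughout, write $j:=0\sigma$, where $\sigma$ is the permutation chosen in Line~1.

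\emph{Correctness.} Soundness is immediate: each set $C$ appended in Line~8 consists, by construction, of genuine isotopisms $U_*\to V_*$, so the running value of $I$ satisfies $I\subseteq{\rm Iso}(U_*,V_*)$; moreover, if the guard of Line~2 fires then $\dim U_j\neq\dim V_j$, whence ${\rm Iso}(U_*,V_*)=\emptyset$ (every component of an isotopism is an isomorphism) and returning $\emptyset$ is correct. Otherwise Line~3 fixes an isomorphism $f_j\colon U_j\to V_j$. For completeness, let $h_*\in{\rm Iso}(U_*,V_*)$; then $h_j\colon U_j\to V_j$ is an isomorphism, so $g:=h_jf_j^{-1}\in\Aut(U_j)$. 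With the default choice $G=\Aut(U_j)$ permitted in Line~4 we have $g\in G$ at once; for the smaller admissible groups built from the isomorphism-invariant labeling of Section~\ref{sec:local}, the membership $g\in G$ is precisely the property that labeling is designed to guarantee. Since $h_j=gf_j$, the isotopism $h_*$ is among those with prescribed $j$-component $gf_j$, hence lies in the coset $C$ computed during the pass indexed by this $g$; thus $h_*\in I$, and with soundness $I={\rm Iso}(U_*,V_*)$. Finally $I$ is returned through the cosets $C=\Aut(U_*)^{(j)}h_*$ (two isotopisms with the same $j$-component differ by an element of the principal autotopism group $\Aut(U_*)^{(j)}$) together with generators of $\Aut(U_*)^{(j)}$; this is the compact coset representation discussed at the start of Section~\ref{sec:autotopism}, and in particular running the algorithm with $V_*=U_*$ returns $\Aut(U_*)$.

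\emph{Running time.} Lines~1--5 and the set bookkeeping inside the loop cost a polynomial in $\dim U_*$ and $\log|K|$ dominated by the rest. The loop makes $|G|$ passes, and one pass executes Line~8: after the harmless shuffle (Section~\ref{sec:shuffles}) moving $U_j$ into position $0$, Algorithm~\ref{algo:partial-isotopism} produces one isotopism with prescribed $j$-component $gf_j$, or reports that none exists, deterministically in time $O((\dim U_*)^{2\omega}\log^2|K|)$ (Proposition~\ref{thm:partial}, Theorem~\ref{thm:principal}); generators for $\Aut(U_*)^{(j)}$, needed to expand that solution into the full coset $C$, are computed \emph{once}, before the loop, by Algorithm~\ref{algo:principal} (Propositions~\ref{prop:principal},~\ref{prop:principal-algo}). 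As $\dim U_*=\dim U_2+\dim U_1+\dim U_0\leq 3\max_i\dim U_i$, we have $(\dim U_*)^{2\omega}=O(\max_i\{(\dim U_i)^{2\omega}\})$, so each pass costs $O(\max_i\{(\dim U_i)^{2\omega}\}\log^2|K|)$ and the $|G|$ passes cost $O(|G|\max_i\{(\dim U_i)^{2\omega}\}\log^2|K|)$ in total, as claimed. (The sole appeal to polynomial factorization over $K$, inside Proposition~\ref{prop:principal-algo}, is made once; it is deterministic with an additive $O({\rm char}\,K)$ term when the prime subfield of $K$ can be listed, and Las Vegas of the stated cost otherwise.)

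The step I expect to need the most care is completeness when $G$ is a \emph{proper} subgroup of $\Aut(U_j)$: one has to be certain that $h_jf_j^{-1}\in G$ for \emph{every} isotopism $h_*$, which is exactly what the invariance of the point/line labeling of Section~\ref{sec:local} must supply; with the default $G=\Aut(U_j)$ the point is vacuous. The only other subtlety is organizational---$\Aut(U_*)^{(j)}$ must be built once, outside the loop, so that the group-of-units computation of Proposition~\ref{prop:principal-algo} is not multiplied by $|G|$.
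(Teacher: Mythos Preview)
Your argument is correct and matches the paper's approach: the paper itself offers no proof beyond declaring Theorem~\ref{thm:PIC} ``an immediate consequence of the results and algorithms of this section,'' and you have simply spelled out those consequences in detail. Your caution about completeness when $G\lneq\Aut(U_j)$ is well placed---the bare constraint $\Aut(U_*)|_{U_j}\leq G$ does not by itself force $h_jf_j^{-1}\in G$ for an \emph{arbitrary} $f_j$---and, as you anticipate, it is resolved because the heuristics of Section~\ref{sec:local} build $G$ from isotopism-invariant labels and (implicitly) choose $f_j$ to respect those same labels on $V_0$.
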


\section{Testing isomorphism of graded algebras}
\label{sec:iso}
Our algorithm to decide isomorphism between graded algebras proceeds
under the assumption that an 
isomorphism exists.  If this does not occur then the test is 
aborted.  A standard mechanism to do this is to raise an exception. This
means that all further steps are aborted and the algorithm backtracks to 
the nearest place that can handle the exception.  
Recall from Proposition~\ref{prop:small-filter} that
we assume an algebra $A_*$ is graded by a monoid $M$ that has size polynomial
in $\dim A_*$.

\subsection{Extending isotopisms to graded isomorphisms}
The isomorphism algorithm  proceeds by attempting to extend
isotopisms between the bimaps obtained by restricting the given products to certain fixed homogeneous components.
We therefore begin by considering the necessary extension problem; our solution is summarized in
Algorithm~\ref{algo:extend}. As $A_*$ and $B_*$ are
generated in degrees $T$, we assume that we have a homotopism whose restriction to $T$, namely
$f_T=\{(f_t\colon A_t\to B_t)\colon  t\in T\}$, is defined. Our task is to extend $f_T$ to a graded
algebra homomorphism $f_*\colon A_*\to B_*$.

For $s\in M \setminus T$, define
\begin{align*}
	A_{\otimes s} &~~ := \bigoplus_{{\tiny \begin{array}{c} s=s_1+s_2, \\ s_i\notin \{0,s\} \end{array}}} A_{s_1}\otimes A_{s_2}~~
	\subset~~ A_*\otimes A_*.
\end{align*}
Given $s\in M$, we can construct $\{(s_1,s_2)\in M\times M: s=s_1+s_2\}$ 
in at most $|M|^2\leq (\dim A_*)^2$ steps.  Define
\begin{align*}
	f_{\otimes s} & ~~:= \bigoplus_{{\tiny \begin{array}{c} s=s_1+s_2, \\ s_i\notin \{0,s\} \end{array}}} f_{s_1}\otimes f_{s_2}
	\in \Hom(A_{\otimes s},B_{\otimes s}),
\end{align*}
where $f_{s_1}\otimes f_{s_2}$ is defined component-wise on $A_{s_1}\otimes A_{s_2}$.
For $s\in M$, let $\prec\hspace*{-0.1cm}s=\{u\in M: u\prec s\}$. 
For $R\subset M$, write $R\prec s$ if 
$r\prec s$ for every $r\in R$.  If $A_*$ is generated in degrees $T$, then 
setting $s=\sum_{t\in T} t$ implies that $T\prec s$. 

\begin{algorithm}[!htbp]
\caption{Extending Homotopisms}
\label{algo:extend}
\begin{algorithmic}[1]
\Require finite $M$-graded $K$-algebras $A_*$ and $B_*$ 
generated in degrees $T \subset M$, and $f_T=\{(f_t\colon A_t\to B_t)\colon  t\in T\}$ such that
for every $t,r\in T$ with $t+r\in T$, the triple $(f_t,f_r, f_{t+r})$ is a homotopism
from $A_r\times A_t\bmto A_{r+t}$ to $B_r\times B_t\bmto B_{r+t}$.
\Ensure an algebra homomorphism $f_*\colon A_*\to B_*$
extending $f_T$, or raise an exception if $f_T$ does not extend to an algebra
homomorphism.
\State $R\gets T$.
\While{$R\neq M$}
	\State Choose $s\in M \setminus R$ such that 
	$\{r\in M\colon r\prec s, r\neq s\}\subset R$.
	\State Induce $\pi_{s,A_*}\colon A_{\otimes s}\to A_{s}$ and likewise 
	$\pi_{s,B_*}\colon B_{\otimes s}\to B_s$.
	\State Compute $\ker\pi_{s,A_*}$ and $\ker\pi_{s,B_*}$.
	\If{$(\ker \pi_{s,A_*})f_{\otimes s}\leq \ker \pi_{s,B_*}$}
		\State $f_s\gets \pi_{s,A_*}^{-1} \cdot f_{\otimes s} \cdot \pi_{s,B_*}\in \Hom(A_s, B_s)$.
		\State $R\gets R\cup\{s\}$.
	\Else
		\State {\bf raise exception} no extension exists at $s$.
	\EndIf
\EndWhile
\State \Return $f_*$.
\end{algorithmic}
\end{algorithm}

\begin{prop}
Algorithm~$\ref{algo:extend}$ is correct and runs deterministically in time 
\[
O(|M|({\rm dim} A_*)^{2\omega}\log^2 |K|).
\] 
\end{prop}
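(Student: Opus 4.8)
Proof proposal.

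The plan is to prove correctness by Noetherian induction along $\prec$, and then read off the complexity. Two facts drive correctness. First, the loop is well founded: whenever $R\neq M$ the set $M\setminus R$ is nonempty and, by our standing hypothesis on $M$, has a $\prec$-minimal element $s$; minimality of $s$ forces $\{r\in M: r\prec s,\ r\neq s\}\subseteq R$, so the selection in Line~3 is always possible, and since $R$ grows by one element per pass the algorithm performs exactly $|M|-|T|$ passes and then halts. Second, because $A_*$ and $B_*$ are generated in degrees $T$, any algebra homomorphism $\hat f_*\colon A_*\to B_*$ restricting to $f_T$ on $T$ is \emph{uniquely determined}: working up along $\prec$, for $s\notin T$ the component $\hat f_s$ is pinned down on the spanning set $\{a_{s_1}\circ a_{s_2}\}$ (over proper decompositions $s=s_1+s_2$) by the homomorphism identity, and lower components are handled inductively. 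So it suffices to show that Algorithm~\ref{algo:extend} reconstructs this forced candidate one degree at a time, raising an exception precisely when the reconstruction stalls.

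Fix a pass processing $s$. Since $R$ is initialized to $T$ and only ever grows, $s\in M\setminus T$, and by the loop invariant $f_r$ is already defined for all $r\in R$, in particular for every $s_1,s_2$ occurring in a proper decomposition of $s$; hence $f_{\otimes s}$ is defined. Since $s\notin T$, generation in degrees $T$ makes $\pi_{s,A_*}\colon A_{\otimes s}\to A_s$ surjective, and likewise $\pi_{s,B_*}$. Now a linear $f_s\colon A_s\to B_s$ with $(a_{s_1}\circ a_{s_2})f_s=(a_{s_1}f_{s_1})\circ(a_{s_2}f_{s_2})$ for every proper $s=s_1+s_2$ exists if and only if $f_{\otimes s}$ carries $\ker\pi_{s,A_*}$ into $\ker\pi_{s,B_*}$ — this is the inclusion tested in Line~6 — and when it does, the assignment $f_s=\pi_{s,A_*}^{-1}\cdot f_{\otimes s}\cdot\pi_{s,B_*}$ of Line~7 is its unique such value (the composite is well defined by the kernel inclusion, independently of the chosen set-theoretic section $\pi_{s,A_*}^{-1}$). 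Consequently the algorithm raises an exception at $s$ exactly when the forced candidate cannot be pushed past $s$, which by the second fact above happens exactly when $f_T$ extends to no algebra homomorphism; and if the loop runs to completion, the assembled $f_*$ is by construction compatible with every proper product, while compatibility with the remaining products — those landing in some $A_s$ and involving the degree-$0$ component — follows from $0\in T$ and the hypothesis on $f_T$ by a short induction along $\prec$ reassociating such products into proper ones. The same forcing shows the output is the unique extension of $f_T$.

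For the running time, there are $|M|-|T|\leq|M|$ passes. In the pass at $s$, enumerating the decompositions $s=s_1+s_2$ costs at most $|M|^2=O((\dim A_*)^2)$ steps under our assumption on $M$; the space $A_{\otimes s}$ has dimension $\sum_{s=s_1+s_2}(\dim A_{s_1})(\dim A_{s_2})$, and these dimensions sum over all $s$ to at most $(\dim A_*)^2$, so each is $O((\dim A_*)^2)$. Inducing $\pi_{s,A_*}$ and $\pi_{s,B_*}$, computing their kernels, testing $(\ker\pi_{s,A_*})f_{\otimes s}\subseteq\ker\pi_{s,B_*}$, and forming $f_s$ are all linear-algebra operations over $K$ on matrices of size $O((\dim A_*)^2)\times O(\dim A_*)$, each costing $O((\dim A_*)^{2\omega})$ field operations (a sharper bound is available but unnecessary). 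Multiplying by the $|M|$ passes and by $O(\log^2|K|)$ for field arithmetic yields the stated complexity; raising and catching the exception adds only constant overhead.

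The delicate point is the equivalence in the second paragraph: one must argue that the kernel inclusion is not merely necessary but also sufficient for extendability at $s$, and in particular that a failure at a single degree is a genuine, unrepairable global obstruction — this is exactly where the uniqueness forced by generation in degrees $T$ is essential — and one must not overlook the bookkeeping for the products involving $A_0$ that the maps $\pi_{s,\bullet}$ do not see. By contrast, the complexity estimate is a routine count of linear-algebra costs.
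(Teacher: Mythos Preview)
Your proof is correct and follows essentially the same approach as the paper's: Noetherian induction along $\prec$, surjectivity of $\pi_{s,\bullet}$ from generation in degrees $T$, the kernel-inclusion criterion for extendability at each step, and a pass-by-pass linear-algebra count for the complexity. You are in fact more careful than the paper in two places --- you argue well-foundedness of the loop explicitly, and your bound on $\dim A_{\otimes s}$ is more accurate than the paper's ``$(\dim A_s)^2$ variables''.

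One small caveat: your sentence about handling products involving $A_0$ by ``reassociating such products into proper ones'' tacitly uses associativity, which is not assumed for general $K$-algebras here; without it you cannot rewrite $a_0\circ(a_{s_1}\circ a_{s_2})$ as a sum of products indexed by proper decompositions of $s$. The paper's proof is simply silent on the $A_0$-compatibility issue, so neither argument is fully airtight on this point, but you deserve credit for noticing it.
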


\begin{proof}
Consider first the correctness of the algorithm.  Observe  that 
$f_*\colon A_*\to B_*$ is a graded homomorphism if, and only if, for all $s$ and $t$,
$(f_s,f_t,f_{s+t})$ is a homotopism from the 
bimap $A_s\times A_t\bmto A_{s+t}$ to $B_s\times B_t\bmto B_{s+t}$.

We claim that the loop starting on Line 2 has the following invariants:
\begin{enumerate}[(i)] 
\item $R$ is an interval closed set of indices; 
\item for all $r\in R$, the map $f_r\colon A_r\to B_r$ is defined; and 
\item for all $r,r'\in R$, if $r+r'\in R$, then $(f_r,f_{r'},f_{r+r'})$
is an isotopism from $A_r\times A_{r'}\bmto A_{r+r'}$ to  $B_r\times B_{r'}\bmto B_{r+r'}$. 
\end{enumerate}
The loop terminates when
$R=M$, so $f_*\colon A_*\to B_*$ is defined and is 
consequently a graded algebra
isomorphism.

Consider any $s$ selected in Line 3. Since $A_*$ is generated in degrees 
$T$, and $T\subset R$, it follows that $\pi_{s,A_*}$ is surjective and 
\begin{align*}
	A_s\cong A_{\otimes s}/\ker \pi_{s,A_*}.
\end{align*}
The same holds for $B_*$.  By our choice of $s$, 
for every $s_1,s_2\notin \{0,s\}$ with $s=s_1+s_2$, $f_{s_1}$ and $f_{s_2}$ are defined,
so $f_{\otimes s}$ is defined.  If 
$(\ker\pi_{s,A_*})f_{\otimes s}\leq \ker \pi_{s,B_*}$, then we may induce
$f_s\colon A_s\to B_s$ on the generators of $A_s$ as follows:
for $a_{s_1}\in A_{s_1}$ and $a_{s_2}\in A_{s_2}$, 
\begin{align*}
	(a_{s_1}\circ a_{s_2})f_s & := (a_{s_1}\otimes a_{s_2})f_{\otimes s}	
	\equiv a_{s_1} f_{s_1}\circ a_{s_2}f_{s_2}\bmod{\ker\pi_{s,B_*}}.
\end{align*}
Conversely, if $f_{\prec s}$ extends to a graded isomorphism 
$f_*\colon A_*\to B_*$, then $f_s$ is defined as above.  Thus, if 
$(\ker\pi_{s,A_*})f_{\otimes s}\not\leq \ker \pi_{s,B_*}$ we
conclude that $\{f_t\colon t\in T\}$ does not extend, and raise an exception to
abort all subsequent steps.

Next, we analyze the timing. The loop executes at most $|M|\in (\dim A_*)^{O(1)}$
iterations, and the timing in each one is dominated by the computation of 
$\ker \pi_{s,A_*}$ and $\ker \pi_{s,B_*}$ and the subsequent membership test
in the latter.  Each requires solving systems of linear
equations in $(\dim A_s)^2$ variables.  In total this takes
$O(\sum_{s} (\dim A_s)^{2\omega}\log^2 |K|)$ steps, as stated.
\end{proof}

\subsection{The isomorphism test}
For an $M$-graded algebra $A_*$ and $S\subset M$, define 
$A_S=\bigoplus_{s\in S} A_s$; for a graded homomorphism $f_*$,
define $f_S=\bigoplus_{s\in S} f_s$.
Algorithm~\ref{algo:simple-iso-graded} is our
isomorphism test for graded algebras. The mechanism for selecting $S$ in Line 1 
is discussed in Section~\ref{sec:selection}.

\begin{algorithm}[!htbp]
\caption{Graded Isomorphism Coset}
\label{algo:simple-iso-graded}
\begin{algorithmic}[1]
\Require finite $M$-graded algebras $A_*$ and $B_*$
generated in degrees $T\subset M$.
\Ensure the coset of graded isomorphisms $A_*\to B_*$, or $\emptyset$
if $A\not\cong B$. 
\State Choose $\emptyset\neq S\subset M$. \hfill 
/*{\em\;see Section~$\ref{sec:selection}$}\,*/
\State Restrict multiplication in 
$A_*$ to obtain the bimap $A_T\times A_S\bmto A_{S+T}$.
\State Similarly, obtain the bimap $B_T\times B_S\bmto B_{S+T}$.
\State 
$I\gets {\rm Iso}(A_T\times A_S\bmto A_{S+T}, B_T\times B_S\bmto B_{S+T})$, using Algorithm~$\ref{algo:isotopism-coset}$.
\State $\Gamma\gets \{ f_*:A_*\to B_*: f_*$ extends some $(f_T,f_S,f_{S+T})\in I\}$, using
Algorithm~\ref{algo:extend}.
\State \Return $\Gamma$.
\end{algorithmic}
\end{algorithm}

\begin{prop}
Algorithm~$\ref{algo:simple-iso-graded}$ is correct.
\end{prop}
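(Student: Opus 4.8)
The plan is to verify that Algorithm~\ref{algo:simple-iso-graded} returns precisely the set of graded isomorphisms $A_*\to B_*$, arguing in two directions. First I would establish \emph{soundness}: every $f_*$ in the returned set $\Gamma$ is a graded isomorphism. This is almost immediate from the correctness of Algorithm~\ref{algo:extend} (the preceding proposition), since each element of $\Gamma$ is produced by that algorithm from a genuine isotopism $(f_T,f_S,f_{S+T})\in I$; one only needs to note that such an $f_*$, being a graded homomorphism extending a tuple of isomorphisms on a generating set of degrees, is necessarily bijective, hence a graded isomorphism. Here one invokes the lemma that $A_s$ consists of linear combinations of products of elements of $\bigcup_{t\in T}A_t$, so that $f_*$ is determined by and inherits surjectivity from $f_T$, and the dimension count $\dim A_s=\dim B_s$ (guaranteed because $I\neq\emptyset$ forces the restricted bimaps to be isotopic, matching dimensions of corresponding components) upgrades surjectivity to bijectivity.

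Second, and this is the substantive direction, I would prove \emph{completeness}: every graded isomorphism $\varphi_*\colon A_*\to B_*$ appears in $\Gamma$. Given such a $\varphi_*$, its restriction to the components indexed by $T$, $S$, and $S+T$ yields a tuple $(\varphi_T,\varphi_S,\varphi_{S+T})$ that is a homotopism—indeed an isotopism—of the bimaps $A_T\times A_S\bmto A_{S+T}$ and $B_T\times B_S\bmto B_{S+T}$, because $\varphi_*$ respects the graded product and maps each component isomorphically. By Theorem~\ref{thm:PIC}, Algorithm~\ref{algo:isotopism-coset} returns the \emph{full} coset $\mathrm{Iso}$ of such isotopisms, so $(\varphi_T,\varphi_S,\varphi_{S+T})\in I$. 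It then remains to check that Algorithm~\ref{algo:extend}, when fed this particular tuple, reconstructs $\varphi_*$ and does not raise an exception: the converse half of the correctness proof for Algorithm~\ref{algo:extend} says exactly that if a partial homotopism extends to a graded isomorphism then the kernel-containment test $(\ker\pi_{s,A_*})f_{\otimes s}\leq\ker\pi_{s,B_*}$ succeeds at every stage and the induced $f_s$ agrees with $\varphi_s$. By Noetherian induction on the grading monoid $M$ (using that every nonempty subset has a minimal element and that the generating degrees $T$ form the base case), the output equals $\varphi_*$.

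A small technical point to address is that Algorithm~\ref{algo:extend} as stated takes $f_T$ and extends it; but the isotopism coset $I$ supplies tuples defined on $T\cup S\cup (S+T)$, so I would remark that restricting such a tuple to the degrees in $T$ gives valid input to Algorithm~\ref{algo:extend}—the hypotheses of that algorithm (that $(f_t,f_r,f_{t+r})$ is a homotopism for $t,r,t+r\in T$) hold because the original tuple was an isotopism of the restricted bimap and $T\subset S+T$-products are a sub-collection of the graded product. One must also observe that the extra components $f_S,f_{S+T}$ carried by the isotopism are consistent with (in fact equal to) those that Algorithm~\ref{algo:extend} would compute, so no inconsistency arises; alternatively, one simply notes that only the $T$-part is logically needed and the rest is redundant bookkeeping used to narrow the search in Line~4 of Algorithm~\ref{algo:isotopism-coset}.

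The main obstacle I anticipate is not any single deep step but rather the careful matching of indexing conventions: ensuring that an isotopism of bimaps on the sections $A_T,A_S,A_{S+T}$ really does restrict to a homotopism on every sub-bimap $A_t\times A_r\bmto A_{t+r}$ with $t,r\in T$—this requires that $S$ be chosen so that $T\subset S$ or, more precisely, that the bimap $A_T\times A_S\bmto A_{S+T}$ genuinely contains the data of the products $A_t\circ A_r$ for $t,r\in T$. If $T\not\subset S$ one needs the slightly more delicate argument that, since $A_*$ is generated in degrees $T$, the homotopism conditions on the $T$-indexed bimaps are themselves \emph{consequences} of the isotopism condition on the chosen section together with compatibility forced through the monoid; spelling this out, and confirming it matches the precondition of Algorithm~\ref{algo:extend}, is where the real care lies. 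Everything else reduces to the already-proved correctness of Algorithms~\ref{algo:isotopism-coset} and~\ref{algo:extend}.
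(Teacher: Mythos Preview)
Your approach is essentially the same as the paper's: both establish an injection from graded isomorphisms to isotopisms of the restricted bimap via restriction, and argue that the algorithm computes the preimage of this injection via Algorithm~\ref{algo:extend}. The paper's proof is considerably terser---it simply observes that restriction is injective (because $f_T$ determines $f_*$ when $A_*$ is generated in degrees $T$) and declares that the algorithm computes the inverse image---whereas you unpack this into explicit soundness and completeness directions and worry about the precondition of Algorithm~\ref{algo:extend}. Your concern in the final paragraph, that an arbitrary isotopism $(f_T,f_S,f_{S+T})\in I$ need not a priori satisfy the homotopism hypothesis on triples $t,r,t+r\in T$, is a legitimate point that the paper elides; but it does not affect the proof strategy, and for the natural case $M=\mathbb{N}$, $T=\{0,1\}$ it is vacuous.
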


\begin{proof}
If $f_*\colon A_*\to B_*$ is a graded isomorphism,
then the restriction 
\[
\left(\bigoplus_{t\in T}f_{t},~~\bigoplus_{s\in S} f_s,
\bigoplus_{s\in S,t\in T} f_{s+t}\right)
\] 
is an isotopism from 
$A_T\times A_S\bmto A_{S+T}$ to  $B_T\times B_S\bmto B_{S+T}$.
Since $A_*$ is generated in degrees $T$, for each $s\in M \setminus T$,
$f_s$ is determined by $f_T=\bigoplus_{t\in T} f_t$.  
In particular, $f_*=1$ if, and only if, $f_T=1$.
Hence, the mapping 
\[
f_*\mapsto 
\left(\bigoplus_{t\in T} f_t,\bigoplus_{s\in S}f_s,\bigoplus_{s\in S,t\in T}f_{s+t}\right) 
\]
is injective.
The algorithm constructs the inverse image of this injection, and
so returns the coset of graded isomorphisms $A_*\to B_*$.  
\end{proof}

\begin{proof}[Proof of Theorem~$\ref{thm:main}$]
We analyze the complexity of Algorithm~\ref{algo:simple-iso-graded}.
If the algorithm discovers that $A_*\not\cong B_*$,
then it terminates. Hence, the case that dominates complexity is $A_*\cong B_*$. 
Using Algorithm~\ref{algo:isotopism-coset}, 
$I={\rm Iso}(U_*,V_*)$ is constructed in time
\[
O(\min\{|\Aut(A_T)|,|\Aut(A_S),|\Aut(A_{S+T})|\}(\dim A_*)^{2\omega}\log^2 |K|),
\]
and hence in time
$O(|\Aut(A_{S+T})|(\dim A_*)^{2\omega}\log^2 |K|)$.
The remaining time to construct ${\rm Iso}(A_*,B_*)$  is $O(|I|)$.
Observe that 
\begin{align*}
	|I| & = |\Aut(U_*)|
		\leq |\mathcal{M}(U_*)^{\times}|\cdot |\Aut(A_{S+T})|.
\end{align*}
The complexity stated in Theorem~\ref{thm:main} now follows by substituting $T=\{1\}$ and $S=\{s\}$, 
where $s$ is the largest positive integer such that $A_{s+1}\neq 0$.
\end{proof}

\subsection{Selecting optimal indices}
\label{sec:selection}
We now discuss the issue left open in 
Line $1$ of Algorithm~\ref{algo:simple-iso-graded}: how to choose the subset, $S$, 
of optimal indices.  Our aim is to predict the order of 
\[
{\rm Aut}\left(A_T\times A_S\bmto A_{S+T}\right)
\] 
without computing it.  This allows us in Line $1$ of 
Algorithm~\ref{algo:simple-iso-graded} 
to sample several subsets $S$ to find a selection whose estimated work is 
either minimal,
or below an acceptable threshold.
If $U_*$ is the bimap $A_T\times A_S\bmto A_{S+T}$, then
by definition there is an exact sequence
\begin{align}\label{eq:adj-w-ext}
	1 \to \mathcal{M}(U_*)^{\times} \to {\rm Aut}(U_*) \to {\rm Aut}(U_0),
\end{align}
from which we immediately obtain the bound  
\begin{equation}
\label{eq:basic-bound}
|{\rm Aut}(U_*)|\leq |\mathcal{M}(U_*)^{\times}|\cdot q^{(\dim U_0)^2}.
\end{equation}

This bound suffices to prove our main theorems, 
but more precise bounds on $|\Aut(U_*)|$ can be obtained with additional work. 
We include a brief discussion here both because our implementation uses the 
better bounds,
and also because future analyses of the complexity of our algorithm for
specific families of inputs may require them.

In \cite{BW:autotopism}*{Theorem~3.2} 
a property of autotopisms is given which leads to a general bound on 
$|{\rm Aut}(U_*)|$. However, a better
bound using the rings defined in Section~\ref{sec:bimap-op}
may be derived from the exact sequences in
\cite{Wilson:Skolem-Noether}*{Theorem~1.2}.
Let ${\rm Out}(R)$ be the group of outer automorphisms of a
ring $R$. 
If $R$ is equipped with an involution $a\mapsto \bar{a}$, 
then $R^{\#}=\{a\in R\colon a\bar{a}=1\}$ denotes
its group of unitary elements, and
${\rm Out}^{\#}(R)$ is the subgroup of ${\rm Out}(R)$
commuting with the involution.

\begin{prop}
\label{prop:first-count}
For a bimap $U_*=(U_2, U_1, U_0,\circ)$ the following holds:
\begin{align*}
	\frac{|\mathcal{T}(U_*)^{\times}|}{|\mathcal{C}(U_*)^{\times}|}
		& \leq |{\rm Aut}(U_*)| \\
		& \leq
		|\mathcal{T}(U_*)^{\times}|\cdot |{\rm Out}(\mathcal{T}(U_*))| \cdot
		\min\{|{\rm Aut}_{\mathcal{T}(U_*)}(U_i)| : 0\leq i\leq 2\}.
\end{align*}
If $U_*$ is weakly Hermitian then $\mathcal{T}(U_*)$ and $\mathcal{C}(U_*)$
are rings with involutions, and
\begin{align*}
	\frac{|\mathcal{T}(U_*)^{\#}|}{|\mathcal{C}(U_*)^{\#}|}
		& \leq |\Psi{\rm Isom}(U_*)| \\
		& \leq
		|\mathcal{T}(U_*)^{\#}|\cdot |{\rm Out}^{\#}(\mathcal{T}(U_*))| \cdot
		\min\{|{\rm Aut}_{\mathcal{T}(U_*)}(U_i)| : 0\leq i\leq 2\}.
\end{align*}
Each bound can be computed in time polynomial in $\sum_{i=0}^2 \dim U_i$.
\end{prop}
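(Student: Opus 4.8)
The plan is to derive both pairs of inequalities from a single structural principle: an autotopism of $U_*$ is an element of $\prod_i \Aut(U_i)$ that normalizes the action of the ternary ring $\mathcal{T}(U_*)$, and $\mathcal{C}(U_*)$ is precisely the part of $\mathcal{T}(U_*)$ that sits inside $\Aut(U_*)$. I would begin by recalling the exact sequence from \cite{Wilson:Skolem-Noether}*{Theorem~1.2}: each autotopism $f_*$ induces by conjugation an automorphism of the ring $\mathcal{T}(U_*)$, and this conjugation is inner exactly when $f_*$ differs from an element of $\mathcal{T}(U_*)^{\times}$ by something centralizing the whole action, i.e. by an element of $\mathcal{C}(U_*)^{\times}$. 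This gives a four-term sequence
\begin{align*}
	1 \to \mathcal{C}(U_*)^{\times} \to \mathcal{T}(U_*)^{\times} \to \Aut(U_*) \to \mathrm{Out}(\mathcal{T}(U_*)),
\end{align*}
and reading off orders along it yields immediately $|\mathcal{T}(U_*)^{\times}|/|\mathcal{C}(U_*)^{\times}| \leq |\Aut(U_*)|$, the lower bound. (One must check that $\mathcal{C}(U_*)^{\times}$ is exactly the kernel of $\mathcal{T}(U_*)^{\times}\to\Aut(U_*)$, which is a direct unwinding of the defining equations of $\mathcal{C}(U_*)$ against those of $\mathcal{T}(U_*)$.)

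For the upper bound, the cokernel piece $\mathrm{Out}(\mathcal{T}(U_*))$ controls how many conjugation-classes of autotopisms there are, but within a single class one still has the freedom of an element normalizing a fixed automorphism; this residual freedom is bounded by the $\mathcal{T}(U_*)$-module automorphisms of any one of the three modules $U_i$, since an autotopism acting trivially (up to $\mathcal{T}$-conjugacy) is determined by $\mathcal{T}$-semilinear data on $U_2, U_1, U_0$ simultaneously, hence in particular restricts to a $\mathcal{T}$-module automorphism of each $U_i$ and is pinned down once we know its effect on the smallest such automorphism group. Combining: $|\Aut(U_*)| \leq |\mathcal{T}(U_*)^{\times}|\cdot|\mathrm{Out}(\mathcal{T}(U_*))|\cdot \min_i |\Aut_{\mathcal{T}(U_*)}(U_i)|$. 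The weakly Hermitian case is formally identical: by Section~\ref{sec:weakly}, fixing an isotopism $g_*\colon U_*\to U_*^{\sigma}$ equips $\mathcal{T}(U_*)$ (and $\mathcal{C}(U_*)$) with an involution via $g_*$-transpose, the group $\pseudo(U_*)$ replaces $\Aut(U_*)$, unitary groups $R^{\#}$ replace unit groups $R^{\times}$, and $\mathrm{Out}^{\#}$ replaces $\mathrm{Out}$; one simply runs the same exact-sequence argument inside the fixed-points of the involution, invoking the $*$-version of \cite{Wilson:Skolem-Noether}*{Theorem~1.2}.

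Finally, for the complexity claim: each of $\mathcal{L}, \mathcal{M}, \mathcal{R}$ (hence $\mathcal{T}$ and $\mathcal{C}$) is cut out of $\prod_i \End(U_i)$ by a system of linear equations of size $O((\dim U_*)^2)$, solvable in time $O((\dim U_*)^{2\omega}\log^2|K|)$; the orders $|\mathcal{T}(U_*)^{\times}|$, $|\mathcal{C}(U_*)^{\times}|$ (and their unitary analogues) are computed from the structure of these finite algebras via \cite{BO}*{Theorem~2.3}; $|\mathrm{Out}(\mathcal{T}(U_*))|$ reduces to the automorphism group of a finite-dimensional algebra, again polynomial-time; and $|\Aut_{\mathcal{T}(U_*)}(U_i)|$ is the unit group of the endomorphism ring of a module over a finite algebra, computed the same way. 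I expect the main obstacle to be the upper-bound argument: making precise that the residual freedom after accounting for $\mathcal{T}(U_*)^{\times}$ and $\mathrm{Out}(\mathcal{T}(U_*))$ is genuinely bounded by $\min_i|\Aut_{\mathcal{T}(U_*)}(U_i)|$ rather than, say, by the product over $i$ — this is exactly the content extracted from the non-equivalence of adjoint-bimap categories with module categories noted after \cite{Wilson:division}*{Theorem~2.10}, and is where the compatibility of the three $\mathcal{T}$-actions across $U_2, U_1, U_0$ must be used, not just the action on one component.
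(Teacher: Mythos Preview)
The paper does not actually give an in-text proof of this proposition; it states the bounds and defers the derivation to ``the exact sequences in \cite{Wilson:Skolem-Noether}*{Theorem~1.2}.'' Your proposal follows precisely that route---you invoke the same exact sequences, extract the lower bound from the image of $\mathcal{T}(U_*)^{\times}$ in $\Aut(U_*)$ with kernel $\mathcal{C}(U_*)^{\times}$, and bound the cokernel for the upper bound---so your approach is the one the paper intends.

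Your self-identified weak point is accurate: the lower bound follows cleanly once you verify that $\mathcal{C}(U_*)^{\times}$ is the kernel of $\mathcal{T}(U_*)^{\times}\to\Aut(U_*)$, but the upper bound requires genuine care. Specifically, writing the four-term sequence $1\to\mathcal{C}^{\times}\to\mathcal{T}^{\times}\to\Aut(U_*)\to\mathrm{Out}(\mathcal{T})$ and ``reading off orders'' does not by itself yield the factor $\min_i|\Aut_{\mathcal{T}(U_*)}(U_i)|$; you still need that an autotopism inducing an inner automorphism of $\mathcal{T}(U_*)$ is, after correction by a unit of $\mathcal{T}(U_*)$, a $\mathcal{T}(U_*)$-module automorphism on each $U_i$, and that it is determined by its restriction to any one $U_i$. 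That is exactly the content supplied by \cite{Wilson:Skolem-Noether}*{Theorem~1.2}, so your citation is correct, but your sketch would benefit from stating this step as a lemma rather than a parenthetical. The complexity discussion and the weakly Hermitian reduction are handled appropriately.
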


\section{Proof of Theorem~\ref{thm:Lie-main}}
\label{sec:nilpotent-Lie}
The efficiency of our test for isomorphism between graded algebras $A_*$ and $B_*$ depends 
critically on two conditions: first, we can find a homogeneous 
component $A_{S+T}$ of moderate size; secondly, the order of 
${\rm Aut}(A_T\times A_S\rightarrowtail A_{S+T})$ is manageable. 
In this section, we consider a natural family of nilpotent Lie algebras whose
basic parameters illustrate the performance of our isomorphism test. 
In doing so we prove Theorem~\ref{thm:Lie-main}.
\medskip

Every nilpotent matrix Lie algebra $\mathfrak{L}_*\leq \mathfrak{gl}(V)$ has a nontrivial
0-eigenspace $V_1$.  
Recursively, for $i\geq 1$, let $V_{i+1}\leq V$ so that $V_{i+1}/V_i$ is the 0-eigenspace 
of the representation of ${\frak L}_*$ on $V/V_i$. The flag $0<V_1<\ldots<V_{\ell}=V$ is  
denoted $\mathcal{F}({\frak L}_*)$.
Conversely, associated to each flag $\mathcal{F}$ of $V$
is a unique maximal nilpotent Lie subalgebra ${\frak P}(\mathcal{F})_*\leq \mathfrak{gl}(V)$ 
such that 
$\mathcal{F}({\frak P}(\mathcal{F})_*)=\mathcal{F}$.  
We say ${\frak L}_*$ is {\em dense} if 
\[
[{\frak L}_*,{\frak L}_*]=[{\frak P}(\mathcal{F}({\frak L}_*))_*,{\frak P}(\mathcal{F}({\frak L}_*))_*].
\]

\subsection{Comparing results}
Before presenting the proof of Theorem~\ref{thm:Lie-main},
we pause to compare its complexity to that of other
algorithms for algebra isomorphism, and to other well known 
computational problems.
It is helpful to use {\em L-notation}
which we define in terms of logarithms to base $q=|K|$:
\begin{align*}
	L_n[\alpha,c] & = q^{(c+o(1))(\log n)^{\alpha}(\log\log n)^{\delta(0,\alpha)}},
	& \mbox{where}~\delta(0,\alpha) = \left\{\begin{array}{cc} 0 & \alpha=0\\ 1 & \alpha>0\end{array}\right..
\end{align*}
This function interpolates between polylogarithms $L_n[0,c]\in \tilde{O}((\log n)^c)$, 
polynomials $L_n[1,c]\in \tilde{O}(n^c)$, quasi-polynomials 
$L_n[2,c]\in \tilde{O}(n^{c\ln n})$, and so forth.  For context, the heuristic estimates (in base $e$) 
for the cost to factor an integer $n$ are
$L_n[\frac{1}{2},1]$ for the quadratic sieve, and $L_n[\frac{1}{3},\sqrt[3]{64/9}]$ for the number field sieve.  
\medskip

Assuming the size of the field is constant,
the complexity of graded algebra isomorphism in Theorem~\ref{thm:main} is
$L_n[2,\frac{1}{2}]$.
\medskip

We compare this to the general isomorphism test for Lie algebras presented in~\cite{Eick:Lie}.
That algorithm must select the first 
homogeneous component ${\frak L}_1$.  In the worst case---where 
${\frak L}_*={\frak L}_1\oplus {\frak L}_2$ and $\dim {\frak L}_1=r\dim {\frak L}_*$ for
some constant $r\leq 1$---it exhaustively searches ${\rm Aut}({\frak L}_1)$, 
resulting in a complexity of $L_n[2,1]$.  For cases in which $\dim {\frak L}_1=r \sqrt{\dim {\frak L}_*}$, however,
the complexity improves to $L_n[1,c]$. This shows how the Hilbert series $\sum_i (\dim {\frak L}_i) x^i$ of the input 
influences the complexity of~\cite{Eick:Lie}; as we explain below, the Hilbert series 
exerts an influence over the 
complexity of our algorithm that is both more
subtle and more emphatic. 
\medskip

Another recent approach to isomorphism~\citelist{\cite{BW:autotopism}\cite{Wilson:Skolem-Noether}} 
can be applied to graded Lie algebras 
of class $2$.  This 
exploits invariant algebras of bimaps and is particularly effective when
there are large automorphism groups.  This method also has 
complexity ranging from a worst case of $L_n[2,1]$ down to nearly optimal run times of 
$L_n[0,2\omega]$ for inputs such as generalized Heisenberg Lie algebras. 
\medskip

Now, let us consider the complexity of our current algorithm as it appears in Theorem~\ref{thm:Lie-main}.  
First, $\varepsilon$ measures the ``area" occupied by the dense algebra ${\frak L}_*$.
The presence of a few large blocks around the middle of 
$\mathcal{F}({\frak L}_*)$ makes 
$\varepsilon$ comparatively small, 
and hence slows down the performance of our algorithm.   
Secondly, $m$ is the dimension of the smallest ${\frak L}_{s+1}$ we encounter, 
subject to the product
${\frak L}_1\times{\frak L}_s\bmto {\frak L}_{s+1}$ being nondegenerate
(the condition $s\leq 1 + d/2$ in the formula for $m$ ensures this). 
As illustrated in Figure~\ref{fig:matrix-select},
small values of $m^2/\varepsilon$ correspond to
super block diagonal layers near the middle that are as thin as possible. 
The dimensions of the possible layers are determined by the 
Hilbert series of the input.

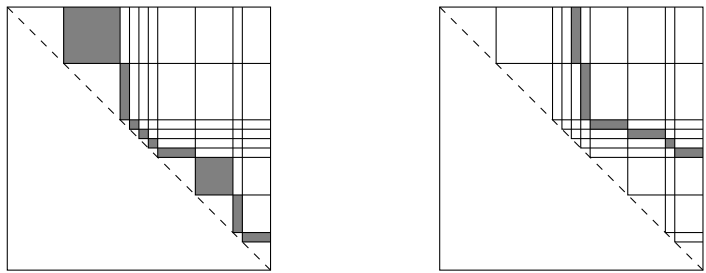
\begin{figure}[!htbp]
\begin{center}
\begin{tikzpicture}[scale=0.5]
	\fill[black!50] (1.5,7.0) -- (1.5,5.5) -- (3.0,5.5) -- (3.0,7.0) -- cycle;
	\fill[black!50] (3.0,5.5) -- (3.0,4.0) -- (3.25,4.0) -- (3.25,5.5) -- cycle;
	\fill[black!50] (3.25,4.0) -- (3.25,3.75) -- (3.5,3.75) -- (3.5,4.0) -- cycle;
	\fill[black!50] (3.5,3.75) -- (3.5,3.5) -- (3.75,3.5) -- (3.75,3.75) -- cycle;
	\fill[black!50] (3.75,3.5) -- (3.75,3.25) -- (4.0,3.25) -- (4.0,3.5) -- cycle;
	\fill[black!50] (4.0,3.25) -- (4.0,3.0) -- (5.0,3.0) -- (5.0,3.25) -- cycle;
	\fill[black!50] (5.0,3.0) -- (5.0,2.0) -- (6.0,2.0) -- (6.0, 3.0) -- cycle;
	\fill[black!50] (6.0,2.0) -- (6.0,1.0) -- (6.25,1.0) -- (6.25,2.0) -- cycle;
	\fill[black!50] (6.25,1.0) -- (6.25,0.75) -- (7.0,0.75) -- (7.0,1.0) -- cycle;	
	
	\draw (1.5,7.0) -- (1.5,5.5) -- (7.0,5.5);
	\draw (3.0,7.0) -- (3.0,4.0) -- (7.0,4.0);
	\draw (3.25,7.0) -- (3.25,3.75) -- (7.0,3.75);
	\draw (3.5,7.0) -- (3.5,3.5) -- (7.0,3.5);
	\draw (3.75,7.0) -- (3.75,3.25) -- (7.0,3.25);
	\draw (4.0,7.0) -- (4.0,3.0) -- (7.0,3.0);
	\draw (5.0,7.0) -- (5.0,2.0) -- (7.0,2.0);
	\draw (6.0,7.0) -- (6.0,1.0) -- (7.0,1.0);
	\draw (6.25,7.0) -- (6.25,0.75) -- (7.0,0.75);
	
	\draw (0,0) -- (7,0) -- (7,7) --  (0,7) -- cycle;
	
	\draw[dashed] (0,7) -- (7,0);
	
\end{tikzpicture}
\hspace{2cm}
\begin{tikzpicture}[scale=0.5]
	
	\fill[black!50] (3.5,7.0) -- (3.5,5.5) -- (3.75,5.5) -- (3.75,7.0) -- cycle;
	\fill[black!50] (3.75,5.5) -- (3.75,4.0) -- (4.0,4.0) -- (4.0,5.5) -- cycle;
	\fill[black!50] (4.0,4.0) -- (4.0,3.75) -- (5.0,3.75) -- (5.0,4.0) -- cycle;
	\fill[black!50] (5.0,3.75) -- (5.0,3.5) -- (6.0,3.5) -- (6.0,3.75) -- cycle;
	\fill[black!50] (6.0,3.5) -- (6.0,3.25) -- (6.25,3.25) -- (6.25,3.5) -- cycle;
	\fill[black!50] (6.25,3.25) -- (6.25,3.0) -- (7.0,3.0) -- (7.0,3.25) -- cycle;
	
	\draw (1.5,7.0) -- (1.5,5.5) -- (7.0,5.5);
	\draw (3.0,7.0) -- (3.0,4.0) -- (7.0,4.0);
	\draw (3.25,7.0) -- (3.25,3.75) -- (7.0,3.75);
	\draw (3.5,7.0) -- (3.5,3.5) -- (7.0,3.5);
	\draw (3.75,7.0) -- (3.75,3.25) -- (7.0,3.25);
	\draw (4.0,7.0) -- (4.0,3.0) -- (7.0,3.0);
	\draw (5.0,7.0) -- (5.0,2.0) -- (7.0,2.0);
	\draw (6.0,7.0) -- (6.0,1.0) -- (7.0,1.0);
	\draw (6.25,7.0) -- (6.25,0.75) -- (7.0,0.75);
	
	\draw (0,0) -- (7,0) -- (7,7) --  (0,7) -- cycle;
	
	\draw[dashed] (0,7) -- (7,0);
	
\end{tikzpicture}
\end{center}
\caption{
Contrasting the work to construct automorphisms}
\label{fig:matrix-select}
\end{figure}

Observe that the work to construct automorphisms by lifting from a fixed shaded layer
is roughly $q^{a^2}$, where $q$ is the size of the field and
$a$ is the area of the shaded region 
(the dimension of the corresponding homogeneous component).  
The shaded area on the left of Figure~\ref{fig:matrix-select} is roughly 3 times
that 
on the right.  Thus, if $w$ is the work 
needed to search for automorphisms using the shaded region on the left, 
then using that 
on the right decreases the work to $w^{1/9}$.

\subsection{Multiplication tables}
We work with ${\frak L}_*$ as matrices relative to
a basis exhibiting $\mathcal{F}({\frak L}_*)$, so elements of
${\frak L}_*$ are block upper-triangular matrices with $0$'s on the diagonal.  
Although the specific entries in each block determine the structure 
of the algebra, much can be deduced simply by studying the block structure.  
For $x\in {\frak L}_*$, denote by $x_{st}$ the block of $x$ in (block) row
$s$ and (block) column $t$.  For $x,y\in {\frak L}_*$, the following is the general
formula for the product on ${\frak L_*}$:
\begin{align}
	\left[\sum_{1\leq s<t\leq d} x_{st},\sum_{1\leq u<v\leq d} y_{uv}\right]
		& = \sum_{1\leq s<u<t\leq d} x_{su} y_{ut} - y_{su}x_{ut}.
\end{align}
Each product $x_{su} y_{ut} - y_{su}x_{ut}$ may be specified by structure
constants depending solely on $(s,t)$; we demonstrate this below for $d=6$.
(As  ${\frak L}_*$ is generated 
in degree $1$, we 
display only the structure constants for each product
${\frak L}_1\times {\frak L}_n\bmto {\frak L}_{n+1}$ for $n=1,\ldots,5$.)
\smallskip

\begin{align*}
\tiny{
\begin{array}{c|ccccc|cccc|ccc|cc|c|}
& x_{12} & x_{23} & x_{34} & x_{45} & x_{56} & x_{13} & x_{24} & x_{35} & x_{46}
	& x_{14} & x_{25} & x_{36} & x_{15} & x_{26} & x_{16}\\[1pt]
\hline
x_{12} &. & a_1  & . & . & . & . & b_1 & . & . & . & c_1 & . & . & d_1 & .\\
x_{23} & -a_1^{\dagger} & .  & a_2 & . & . & . & . & b_2 & . & . & . & c_2 & . & . & .\\
x_{34} & . & -a_2^{\dagger}  & . & a_3 & . & -b_1^{\dagger} & . & . & b_3 & . & . & . & . & . & .\\
x_{45} &. & .  & -a_3^{\dagger} & . & a_4 & . & -b_2^{\dagger} & . & . & -c_1^{\dagger} & . & . & . & . & .\\
x_{56} & . & .  & . & -a_4^{\dagger} & . & . & . & -b_3^{\dagger}  & . & . & c_2^{\dagger} & . & -d_1^{\dagger} & . & .\\[1pt]
\hline
\end{array}
}
\end{align*}
\smallskip

This example illustrates two competing tensions that determine our success.
On one hand,  we want to select $s$ such that 
${\rm Aut}({\frak L}_1\times {\frak L}_s\bmto {\frak L}_{1+s})$ is small.  This 
occurs when the corresponding product has the most constraining equations, which 
happens generically when $s$ is small.  On the other hand, our method to compute 
${\rm Aut}({\frak L}_1\times {\frak L}_s\bmto {\frak L}_{1+s})$
requires that we list $\Aut(\frak{L}_{1+s})$, and the order of this group tends to decrease 
as $s$ increases.  
This explains why the best choice of $s$ is typically near the middle.

\subsection{Estimating \boldmath$\Aut({\frak L}_1\times {\frak L}_s\bmto {\frak L}_{1+s})$}
\label{sec:auto-est}
To prove Theorem~\ref{thm:Lie-main} it suffices to consider the bound on 
$\Aut({\frak L}_1\times {\frak L}_s\bmto {\frak L}_{1+s})$
arising from the exact sequence \eqref{eq:adj-w-ext}.  In particular we show that if 
$s\leq 1+d/2$, then  the choice $U_*=({\frak L}_1,{\frak L}_s,{\frak L}_{1+s},[,])$ of homogeneous 
component in Theorem~\ref{thm:main} leads to the complexity stated in Theorem~\ref{thm:Lie-main}.

Put $U_2:={\frak L}_1,\,U_1:={\frak L}_s$, and $U_0:={\frak L}_{1+s}$. 
It suffices to establish the bounds
\begin{align}\label{eq:easy-bound}
	|\mathcal{M}(U_*)^{\times}|
		& \leq q^{d^2} \leq |{\frak L}|^{1/\varepsilon}, &
	|{\rm Aut}(U_0)| & \leq q^{m^2d^2}\leq |{\frak L}|^{m^2/\varepsilon}.
\end{align}
As suggested in Section~\ref{sec:selection} we can derive more subtle bounds by applying 
Proposition~\ref{prop:first-count}, but the analysis offers little insight into the complexity 
of our algorithm.

We first make an observation. If $X_*^i=(X_2,X_1,X_0^i,\circ_i)$ $(i=1,2)$ are two bimaps on a common
domain $X_2\times X_1$, but with possibly different codomains and products, then we
can define a new bimap $Y_*=X_*^1\cap X_*^2$ as follows. Put $Y_2:=X_2$, $Y_1:=X_1$, 
$Y_0:=X_0^1\oplus X_0^2$, and define $\circ\colon Y_2\times Y_1\bmto Y_0$
by 
\begin{align*}
	y_2\circ y_1 := (y_2\circ_1 y_1)\oplus (y_2\circ_2 y_1).
\end{align*}
Note that $\mathcal{M}(X_*^1\cap X_*^2)=\mathcal{M}(X_*^1)\cap\mathcal{M}(X_*^2)$, 
so selecting a basis for $U_{0}$ decomposes $U_*$ as $\bigcap_i U_*^i$,
thereby making it simpler to compute the ring of adjoints.
\medskip

To compute $\mathcal{M}(U_*)$  we solve equations of the form $FA=AG^{\dagger}$, 
where $A$ is a matrix of structure constants of $U_*$ of the form
\begin{align}
\label{eq:main-panel}
A=\tiny{
	\begin{bmatrix}
	. & a_1 & \\
	 & \ddots & \ddots \\
	 & & . & a_{\ell}\\
	 -a^{\dagger}_1 & . \\
	  & \ddots & \ddots \\
	  & & -a^{\dagger}_{\ell} & .
	\end{bmatrix}.
}
\end{align}
We caution that our illustration shows the structure 
constants for a typical configuration but changing the numbers of blocks
both changes the number of boxes and alters their positions.
To solve this system of equations, we decompose $U_*$ into terms $U_*^i$ using
the block structure, as follows:
\begin{align*}
\tiny{
	\begin{bmatrix}
	. & a_1 & \\
	 & \ddots & \ddots \\
	 & & . & a_{\ell}\\
	 -a^{\dagger}_1 & . \\
	  & \ddots & \ddots \\
	  & & -a^{\dagger}_{\ell} & .
	\end{bmatrix}
	=
	\begin{bmatrix}
	. & a_1 & \\
	 & \ddots & \ddots \\
	 & & . & .\\
	 -a^{\dagger}_1 & . \\
	  & \ddots & \ddots \\
	  & & . & .
	\end{bmatrix}
	\cap\cdots \cap
		\begin{bmatrix}
	. & . & \\
	 & \ddots & \ddots \\
	 & & . & a_{\ell}\\
	 . & . \\
	  & \ddots & \ddots \\
	  & & -a^{\dagger}_{\ell} & .
	\end{bmatrix}
}
.
\end{align*}
This is not precisely a decomposition into the blocks $U_*^i$ because
each term on the right hand side is padded with zeros, thereby
defining a degenerate extension $\hat{U}_*^i$ of the desired $U_*^i$.
However, the relationship between $\mathcal{M}(\hat{U}_*^i)$ and 
$\mathcal{M}(U_i^*)$ is straightforward:
\begin{align*}
	\mathcal{M}(\hat{U}^i_*) & = \mathcal{M}(U^i_*)
		+\Hom\left(U^i_2,(U^i_1)^{\perp}\right)\times 
		\Hom\left(U^i_1,(U^i_2)^{\perp}\right),~~\mbox{and} \\
\mathcal{M}\left(\tiny{\begin{bmatrix}
	. & . & \\
	 & . & a_i \\
	 & & . & .\\
	 . & . \\
	  & -a_i^{\dagger} & . \\
	  & & . & .
	\end{bmatrix}}\right)	
& =\left\{
	\tiny{
	\left(
	\begin{bmatrix}
	* & . & * & * & . & *\\
	. & \alpha & . & . & \beta & . \\
	* & . & * & * &. & *\\
	* & . & * & * & . & * \\
	. & \gamma & . & . & \delta & . \\
	* & . & * & * & . & * \\
	\end{bmatrix},
	\begin{bmatrix}
	* & . & * & . & *\\
	. & \delta^{\dagger} & . & -\beta^{\dagger} & . \\
	* & . & * & . & *\\
	. & -\gamma^{\dagger} & . & \alpha^{\dagger} & . \\
	* & . & * & . & *\\
	\end{bmatrix}\right)	
	}
	\right\}
\end{align*}
Crucially, the positions of the zeros in the matrices of the
$\mathcal{M}(U_*^i)$ shift as $i$ changes.  
Recalling that $\mathcal{M}(U_*)=\bigcap_i
\mathcal{M}(U_*^i)$ we discover that
\begin{align*}
\mathcal{M}\left(\tiny{\begin{bmatrix}
	. & a_1 & \\
	 & \ddots & \ddots \\
	 & & . & a_{\ell}\\
	 -a^{\dagger}_1 & . \\
	  & \ddots & \ddots \\
	  & & -a^{\dagger}_{\ell} & .
	\end{bmatrix}
}\right)	
& \cong \left\{
	\tiny{
	\begin{bmatrix}
	\alpha_0^{\phantom{\dagger}} &  &  &   \\
	\beta_1 & \alpha_1 &  &   \\
	 &  & \ddots &   &   \\
	 &  &  &  &  \\
	 &   &  &  &  \\
	 &  &    & \alpha_{\ell-1} & \beta_2 \\
	 &  &    &  & \alpha_{\ell} \\
	\end{bmatrix}}
	\right\}.
\end{align*}
Here, $\alpha_i$
is an $(e_i\times e_i)$-matrix where $e_i=\dim U_1^i$, $\beta_1$ is
a $(e_1\times e_2)$-matrix, and $\beta_{2}$ is a $(e_{\ell-1}\times e_{\ell})$-matrix.
Hence,
\begin{align}
	\dim \mathcal{M}(U_*) & \leq e_1\cdot e_2+e_{\ell-1}\cdot e_{\ell}
		+\sum_{i=1}^{\ell} e_i^2.
\end{align}
We remark that the specific entries $\alpha_i$ are further constrained, since they
are adjoints of the individual bimaps $U_*^i$.
Finally, we observe that $e_i=\dim V_i-\dim V_{i-1}$, where $0=V_0<V_1<\cdots<V_{\ell}=V$
is the fixed point flag for ${\frak L}_*$, from which the desired bound
$|\mathcal{M}(U_*)^{\times}|\leq |{\frak L}_*|^{1/\varepsilon}$  in~\eqref{eq:easy-bound} follows.
The bound on ${\rm Aut}(U_0)$ is straight-forward, since 
\begin{align*}
	\dim U_0=\dim {\frak L}_{1+s} = \sum_{i=1}^{s} d_i d_{i+s+1}.
\end{align*}
Thus, $|{\rm Aut}(U_*)|\leq |\mathfrak{L}_*|^{1/\varepsilon}|\cdot |\mathfrak{L}_*|^{m^2/\varepsilon}$,  so
Theorem~\ref{thm:Lie-main} now follows from Theorem~\ref{thm:main}.

\subsection{Number of isomorphism types}

An immediate bound to the number of isomorphism types 
in $\mathcal{F}_q(d_1,\dots,d_{\ell})$  is 
provided by the number
of partitions, which grows exponentially in the dimension.  Having fixed the
partition, the number of isomorphism types among dense subalgebras 
is also large. 
To see this, observe that blocks yield numerous characteristic subalgebras.  
We can, for example, remove rows and columns resulting
in characteristic quotients.  If ${\frak P}(\mathcal{F})$ is symmetric 
along the anti-diagonal, then we must remove both rows and columns;
otherwise we can remove these independently.
By removing sufficiently many rows/columns, we may assume that
the partition has three parts. 
The number of isomorphism types for algebras based on such partitions
may be estimated using arguments similar to those
of Higman and Sims~\cite{BNV:enum}*{Chapter 2}
and suffice for the bound stated in Theorem~\ref{thm:Lie-main}.

\section{Pseudo-isometries}
\label{sec:pseudo}
The occasional presence of symmetry provides an opportunity to
improve the complexity of our isomorphism test for graded algebras.
For example, if
$A_{*}$ is a commutative algebra
generated in degrees $T$, then the map
$A_T\times A_T\bmto A_{T+T}$ is symmetric;
if $A$ is a Lie algebra, then $A_T\times A_T\bmto A_{T+T}$ is alternating. 
In such cases, the group of graded automorphisms of $A_{*}$ embeds
in $ \pseudo(A_T\times A_T\bmto A_{T+T}) $,
the group of pseudo-isometries
defined in Section~\ref{sec:weakly};  
typically this is a proper subgroup of 
$\Aut(A_T\times A_T\bmto A_{T+T})$.
To take advantage of this observation, we must lift 
elements of $\Aut(A_{T+T})$ directly to 
$\pseudo(A_T\times A_T\bmto A_{T+T})$ 
 rather than to 
$\Aut(A_T\times A_T\bmto A_{T+T})$. 
Ivanyos and Qiao~\cite{IQ} devised an algorithm to do this.
\smallskip

Our objective is to solve the following problem.

\begin{problem}
{\sc PseudoIsometryCoset}
\begin{description}
\item[Given]  weakly Hermitian bimaps $U_*$ and $V_*$.

\item[Return] the coset of all pseudo-isometries of $f_*\colon U_*\to V_*$.
\end{description}
\end{problem}

Once again, we first consider the restricted version of the problem
that arises by insisting that maps induce the identity on the codomain. The analogue
of a principal isotopism in this weakly Hermitian context is an {\em isometry}.

\begin{defn}
Let $U_*$ and $V_*$ be weakly Hermitian bimaps. If 
$h\colon U_0\to V_0$ is a linear isomorphism, then $U_*$ and $V_*$
are {\em $h$-isometric} if there is a pseudo-isometry $f_*\colon U_* \to V_*$ 
with $f_0=h$. If $U_*= V_*$ and $h = 1$, then $\Isom(U_*)$ is the {\em isometry group} of $U_*$.
\end{defn}

Consider the following problem and its coset analogue.

\begin{problem}
{\sc IsometryGroup}
\begin{description}
\item[Given] a weakly Hermitian bimap $U_*$.

\item[Return] generators for $\Isom(U_*)$.
\end{description}
\end{problem}

\begin{problem}
{\sc IsometryCoset}
\begin{description}
\item[Given]  weakly Hermitian bimaps $U_*, V_*$ and a linear isomorphism
$h\colon U_0 \to V_0$.

\item[Return] the coset of all isometries from $U_*\to V_*$ extending $h$.
\end{description}
\end{problem}

Polynomial-time solutions to {\sc IsometryGroup} and {\sc IsometryCoset}
when $K$ has odd characteristic 
appear in \cite[Theorem~1.2]{BW:isom} and \cite{IQ}, respectively.
We combine 
these and an adaption of Algorithm~\ref{algo:isotopism-coset} to solve {\sc PseudoIsometryCoset}.

\section{Heuristics}
\label{sec:local}
The family of graded Lie algebras described in Section~\ref{sec:nilpotent-Lie}
illustrates decisively the importance of the overarching ``layer selection" 
philosophy of our approach.  In this section, we assume that we have 
chosen a bimap $U_*=(U_2,U_1,U_0,\circ)$,
and consider the problem of constructing its group of autotopisms.

In Line~4 of Algorithm~\ref{algo:isotopism-coset}, we have a 
bimap $U_*=(U_2,U_1,U_0,\circ)$ and we must choose a group 
$G$ with $\Aut(U_*)|_{U_0} \leq G \leq \Aut(U_0)$. 
Without further information, we are forced to
choose $G=\Aut(U_0)$, a group which is often too large to search exhaustively. 
In this section we introduce heuristics to cut down the order of $G$.

One idea is to compose $U_2\times U_1\bmto U_0$ with 
projections $\pi\colon U_0\to K^c$ and record isotopism invariants of the
resulting bimap $U_*^{\pi}\colon U_2\times U_1 \bmto K^c$.  
Thus, we label subspaces of the dual space $U_0^{\dagger}=\Hom_K(U_0,K)$.
There are $O(|K|^{c( \dim U_0)})$ subspaces of dimension $c$, 
by contrast to the $O(|K|^{(\dim U_0)^2})$ elements of $\Aut(U_0)$ we would
otherwise be forced to list. It is therefore reasonable to list and label 
the former for small values of $c$. One then chooses $G$ as the subgroup of $\Aut(U_0)$
that preserves labels. 

In practice, 
we treat the projective geometry of $U_0^{\dagger}$ as a complete, 
colored graph, where vertices and edges are colored according to isotopism
invariants. We then construct $G$ as the automorphism group of the colored graph.
Since no polynomial-time algorithm is known to construct the automorphism group 
of a graph, we cannot satisfactorily bound the complexity of this task.
In practice, the construction of the graph is the more expensive task since 
the NAUTY algorithm~\cite{nauty} is extremely fast on generic graphs.
Of course, there is no guarantee that $G$ is a {\it proper} subgroup of $\Aut(U_0)$. 
We discuss this matter further in Section~\ref{sec:imp}.

We now describe our labels for 
the points (vertices) and lines (edges) of $U_0^{\dagger}$.
We also examine another heuristic technique
called {\em fingerprinting} that was first introduced in~\cite{ELGOB}.

\subsection{Vertex labels}
For each epimorphism $\pi\colon U_0\to K$, we
define a $K$-bilinear form $U_*^{\pi}$.
In particular, if we fix bases for $U_2\cong K^{a\times 1}$ and 
$U_1\cong K^{1\times b}$, then 
$U_*^{\pi}\colon K^a\times K^b\bmto K$ is represented by its
Gram matrix $D\in K^{a\times b}$ defined by
\begin{align*}
	(e_i* e_j)\pi & = e_i D e_j.
\end{align*}
Base changes to $U_2$ and $U_1$ leave the rank of the 
Gram matrix unchanged. We use the rank of $D$ to label 
$\langle \pi\rangle\in \mathbb{P}(U_0^{\dagger})$; this can be computed using \cite{Wilson:GramSchmidt}.

\subsubsection{Examples.}
\label{sec:examples}
To illustrate subtleties arising from vertex labels, 
we consider two alternating
bimaps $K^4\times K^4\bmto K^3$ specified 
by systems of alternating forms.
Let
\begin{align*}
D & = 
\left[
\begin{bmatrix}
0 & 1 & 0 & 0\\
-1 & 0 & 0 & 0\\
0 & 0 & 0 & 1\\
0 & 0 & -1 & 0 
\end{bmatrix},
\begin{bmatrix}
0 & 0 & 0 & 0\\
0 & 0 & 1 & 0\\
0 & -1 & 0 & 0\\
0 & 0 & 0 & 0 
\end{bmatrix},
\begin{bmatrix}
0 & 0 & 0 & 0\\
0 & 0 & 0 & 1\\
0 & 0 & 0 & 0\\
0 & -1 & 0 & 0 
\end{bmatrix}\right],
\end{align*} 
\begin{align*}
	E & = 
\left[
\begin{bmatrix}
0 & 1 & 0 & 0\\
-1 & 0 & 0 & 0\\
0 & 0 & 0 & 1\\
0 & 0 & -1 & 0 
\end{bmatrix},
\begin{bmatrix}
0 & 0 & 0 & 0\\
0 & 0 & 1 & 0\\
0 & -1 & 0 & 0\\
0 & 0 & 0 & 0 
\end{bmatrix},
\begin{bmatrix}
0 & 0 & 0 & 1\\
0 & 0 & 0 & 0\\
0 & 0 & 0 & 0\\
 -1 & 0 & 0 & 0 
\end{bmatrix}\right].
\end{align*}
In each case the dual space $\Hom(K^3,K)$ determines a projective plane. 
For each point $P=(x_1:x_2:x_3)$ in the plane
we obtain matrices
\begin{align*}
	M_D(P) & \equiv x_1 D[1]+x_2 D[2]+x_3 D[3] 
		\equiv \begin{bmatrix}
		0 & x_1 & 0 & 0\\
		-x_1 & 0 & x_2 & x_3\\
		0 & -x_2 & 0 & x_1\\
		0 & -x_3 & -x_1 & 0 	
	\end{bmatrix}
	\pmod{K^{\times}},
\end{align*}
\begin{align*}
	M_E(P) & \equiv x_1 E[1]+x_2 E[2]+x_3 E[3] 
	\equiv \begin{bmatrix}
		0 & x_1 & 0 & x_3\\
		-x_1 & 0 & x_2 & 0\\
		0 & -x_2 & 0 & x_1\\
		-x_3 & 0 & -x_1 & 0 	
	\end{bmatrix}
	\pmod{K^{\times}}.
\end{align*}
As indicated, these matrices are unique up to nonzero scalars, 
and scalars do not affect their ranks.
Since $M_D(P)$ and $M_E(P)$ are nonzero and alternating, the only possible 
ranks are $2$ and $4$, distinguished by whether the determinants
\begin{align*}
\det(M_D(P))  = x_1^4, &  & \det(M_E(P))  = x_1^4+2x_1^2 x_2x_3+x_2^2 x_3^2
\end{align*}
are zero or nonzero. Hence,
the points of rank $2$ for $D$ lie on the hyperplane at infinity, 
namely $(0:0:1)+(0:1:0)$, while
all other points have rank $4$.  
For $E$, the only rank 2 points with $x_1=0$ satisfy $x_2x_3=0$, so there are just two, 
namely $(0:1:0)$ and $(0:0:1)$.
The remaining rank 2 points have the form $(1:a:-1/a)$ for $a\in K^\times$. Hence, 
there are also $|K|+1$ total
points of rank 2, but they clearly do not lie on a line. We illustrate the situation for 
the field of order $3$ in~Figure~\ref{fig:ex-1}. 

It follows that the alternating bimaps defined by the systems
$D$ and $E$ are not pseudo-isometric, a fact that will be transparent when we consider line labels.

\begin{figure}[!htbp]
\begin{center}
\begin{tikzpicture}[
mydot/.style={
  draw,
  circle,
  fill=black,
  inner sep=1.5pt}
]
\tikzstyle{vertex}=[circle,fill=black!25,minimum size=17pt,inner sep=0pt]

\node[vertex] (p100) at (0,0) {100}; 
\node[vertex] (p110) at (1,0) {110};
\node[vertex] (p120) at (2,0) {120};

\node[vertex] (p101) at (0,1) {101};
\node[vertex] (p111) at (1,1) {111};
\node[vertex] (p121) at (2,1) {121};

\node[vertex] (p102) at (0,2) {102};
\node[vertex] (p112) at (1,2) {112};
\node[vertex] (p122) at (2,2) {122};

\node[vertex] (p010) at (3.5,1) {010};
\node[vertex] (p011) at (2.75,2.75) {011};
\node[vertex] (p012) at (-0.75,2.75) {012};

\node[vertex] (p001) at (1,3.5) {001};

\draw[color = black] (p100) -- (p110) -- (p120) edge[bend right=25] (p010);
\draw[color = black] (p101) -- (p111) -- (p121) -- (p010);
\draw[color = black] (p102) -- (p112) -- (p122) edge[bend left=25] (p010);

\draw[color = black] (p100) -- (p101) -- (p102) edge[bend left=25] (p001);
\draw[color = black] (p110) -- (p111) -- (p112) -- (p001);
\draw[color = black] (p120) -- (p121) -- (p122) edge[bend right=25] (p001);

\draw[color = black] (p100) -- (p111) -- (p122) -- (p011);
\draw[color = black] (p101) -- (p112) edge[bend left=25] (p011); \draw[color =black] (p011) edge[bend left=45] (p120);
\draw[color = black] (p110) -- (p121) edge[bend right=25] (p011); \draw[color =black] (p011) edge[bend right=45] (p102);

\draw[color = black] (p012) -- (p102) -- (p111) -- (p120);
\draw[color = black] (p012) edge[bend right=45] (p100) edge[bend left=25] (p112); \draw[color=black] (p112) -- (p121);
\draw[color = black] (p012) edge[bend left=45] (p122) edge[bend right=25] (p101); \draw[color=black] (p101) -- (p110);

\draw[color= black] (p012) edge[bend left=15] (p001);
\draw[color= black] (p001) edge[bend left=15] (p011);
\draw[color= black] (p011) edge[bend left=15] (p010);

\end{tikzpicture}%
\end{center}

\begin{tikzpicture}[
mydot/.style={
  draw,
  circle,
  fill=black,
  inner sep=1.5pt}
]
\tikzstyle{vertex}=[circle,fill=black!25,minimum size=17pt,inner sep=0pt]

\node[vertex] (p100) at (0,0) {4}; 
\node[vertex] (p110) at (1,0) {4};
\node[vertex] (p120) at (2,0) {4};

\node[vertex] (p101) at (0,1) {4};
\node[vertex] (p111) at (1,1) {4};
\node[vertex] (p121) at (2,1) {4};

\node[vertex] (p102) at (0,2) {4};
\node[vertex] (p112) at (1,2) {4};
\node[vertex] (p122) at (2,2) {4};

\node[vertex] (p010) at (3.5,1) {2};
\node[vertex] (p011) at (2.75,2.75) {2};
\node[vertex] (p001) at (1,3.5) {2};
\node[vertex] (p012) at (-0.75,2.75) {2};

\draw[color = red] (p100) -- (p110) -- (p120) edge[bend right=25] (p010);
\draw[color = red] (p101) -- (p111) -- (p121) -- (p010);
\draw[color = red] (p102) -- (p112) -- (p122) edge[bend left=25] (p010);

\draw[color = red] (p100) -- (p101) -- (p102) edge[bend left=25] (p001);
\draw[color = red] (p110) -- (p111) -- (p112) -- (p001);
\draw[color = red] (p120) -- (p121) -- (p122) edge[bend right=25] (p001);

\draw[color = red] (p100) -- (p111) -- (p122) -- (p011);
\draw[color = red] (p101) -- (p112) edge[bend left=25] (p011); \draw[color =red] (p011) edge[bend left=45] (p120);
\draw[color = red] (p110) -- (p121) edge[bend right=25] (p011); \draw[color =red] (p011) edge[bend right=45] (p102);

\draw[color = red] (p012) -- (p102) -- (p111) -- (p120);
\draw[color = red] (p012) edge[bend right=45] (p100) edge[bend left=25] (p112); \draw[color=red] (p112) -- (p121);
\draw[color = red] (p012) edge[bend left=45] (p122) edge[bend right=25] (p101); \draw[color=red] (p101) -- (p110);

\draw[color= green] (p012) edge[bend left=15] (p001);
\draw[color= green] (p001) edge[bend left=15] (p011);
\draw[color= green] (p011) edge[bend left=15] (p010);

\end{tikzpicture}%
\qquad
\begin{tikzpicture}[
mydot/.style={
  draw,
  circle,
  fill=black,
  inner sep=1.5pt}
]
\tikzstyle{vertex}=[circle,fill=black!25,minimum size=17pt,inner sep=0pt]

\node[vertex] (p100) at (0,0) {4}; 
\node[vertex] (p110) at (1,0) {4};
\node[vertex] (p120) at (2,0) {4};

\node[vertex] (p101) at (0,1) {4};
\node[vertex] (p111) at (1,1) {4};
\node[vertex] (p121) at (2,1) {2};

\node[vertex] (p102) at (0,2) {4};
\node[vertex] (p112) at (1,2) {2};
\node[vertex] (p122) at (2,2) {4};

\node[vertex] (p010) at (3.5,1) {2};
\node[vertex] (p011) at (2.75,2.75) {4};
\node[vertex] (p001) at (1,3.5) {2};
\node[vertex] (p012) at (-0.75,2.75) {4};

\draw[color = blue] (p100) -- (p110) -- (p120) edge[bend right=25] (p010);
\draw[color = red] (p101) -- (p111) -- (p121) -- (p010);
\draw[color = red] (p102) -- (p112) -- (p122) edge[bend left=25] (p010);

\draw[color = blue] (p100) -- (p101) -- (p102) edge[bend left=25] (p001);
\draw[color = red] (p110) -- (p111) -- (p112) -- (p001);
\draw[color = red] (p120) -- (p121) -- (p122) edge[bend right=25] (p001);

\draw[color = green] (p100) -- (p111) -- (p122) -- (p011);
\draw[color = blue] (p101) -- (p112) edge[bend left=25] (p011); \draw[color =blue] (p011) edge[bend left=45] (p120);
\draw[color = blue] (p110) -- (p121) edge[bend right=25] (p011); \draw[color =blue] (p011) edge[bend right=45] (p102);

\draw[color = green] (p012) -- (p102) -- (p111) -- (p120);
\draw[color = red] (p012) edge[bend right=45] (p100) edge[bend left=25] (p112); \draw[color=red] (p112) -- (p121);
\draw[color = green] (p012) edge[bend left=45] (p122) edge[bend right=25] (p101); \draw[color=green] (p101) -- (p110);

\draw[color= red] (p012) edge[bend left=15] (p001);
\draw[color= red] (p001) edge[bend left=15] (p011);
\draw[color= red] (p011) edge[bend left=15] (p010);

\end{tikzpicture}%

\caption{On top is the geometry $PG(2,3)$ labeled by homogeneous coordinates. 
The labeled and colored versions corresponding to bimaps
$D$ and $E$ are shown on the left and right, respectively.}
\label{fig:ex-1}
\end{figure}
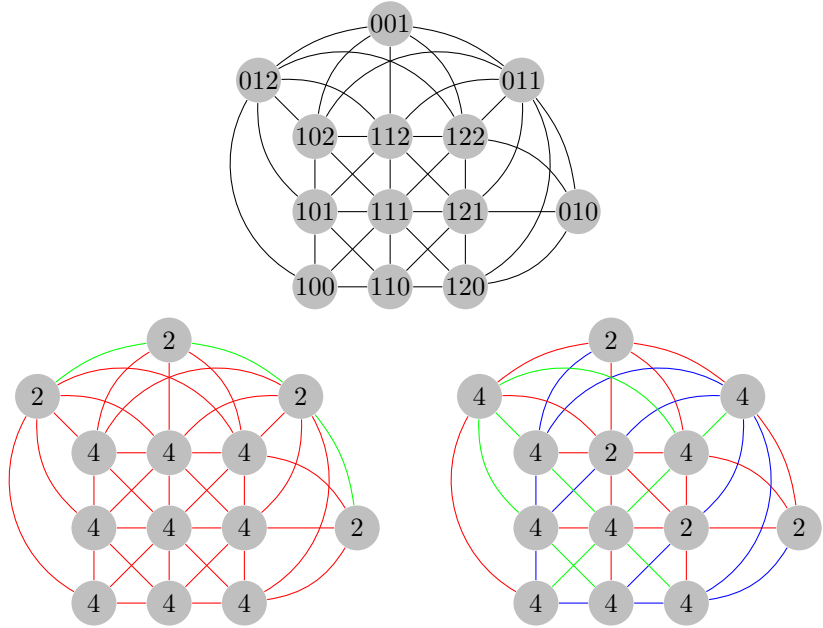

\subsection{Line labels}
\label{sec:line-labels}
For each epimorphism $\lambda\colon U_0\to K^2$, 
we consider the composition 
$U_*^{\lambda}\colon U_2\times U_1\bmto U_0 \to K^2$.
Choices of bases for $U_2,U_1,U_0$ determine
pairs of matrices associated to $U_*^{\lambda}$,
and the label of $\langle \lambda \rangle$ is a polynomial invariant
derived from this pair.

Characterizations of {\em indecomposable} pairs of matrices
defined over the complex numbers were given 
by Kronecker and later for arbitrary fields by 
Dieudonn\'{e}~\cite{Dieudonne}.  Using~\cite{Dieudonne},
Scharlau~\cite{Scharlau} characterized indecomposable
skew-symmetric pairs.
The reduction to indecomposable pairs is accomplished
by computing a fully-refined orthogonal decomposition 
of  $U_*^{\lambda}$ using an algorithm of Wilson~\cite{Wilson:find-central}.
Based on Scharlau's characterization,
the aforementioned polynomial invariant for alternating pairs was first introduced
by Vishnivitski\u{\i}~\cite{Vish:2}
for prime fields. It was later shown in~\cite[Theorem 3.22]{BMW} to determine alternating pairs
up to pseudo-isometry over all finite fields.
The invariant may be adapted to determine all $U_*^{\lambda}$ up
to autotopism.
\smallskip

There are two consequences. 
First, the best possible labels for the lines of our projective geometry
can be computed efficiently using the algorithm of~\cite{BMW}.
Secondly, there is a far greater variety of possible line labels than
there are point labels, and so the study of lines 
may reveal significant global constraints. 

\subsubsection{Examples, revisited.} 
\label{sec:ex-rev}
Although the bimaps $D$ and $E$ in Section~\ref{sec:examples} are too small
to illustrate great variability, we already begin to see differences in behaviour. 
We illustrate this for the field of order $3$ in Figure~\ref{fig:ex-1}. 
For bimap $D$ there are only two line labels. The first (colored green) labels 
the single line at infinity
$(0:0:1)+(0:1:0)$, and the other lines are all labeled the same (colored red).
For the bimap $E$ there are three labels (colored blue, green, and red). 

\subsection{Fingerprinting}
\label{subsec:finger}
This process was introduced in~\cite{ELGOB} in the 
context of constructing automorphism groups of $p$-groups. Here, we examine 
it in more detail and generalize it to our setting.

First, consider a linear map $f\colon V\to W$. Every subspace of $V$ is mapped to 
a subspace of $W$, and
the preimage of a subspace of $W$ is a subspace of $V$. This correspondence is 
order-preserving and allows us to
compare the projective geometries of $V$ and $W$.

Next, consider a bilinear map $U_*\colon U_2\times U_1\bmto U_0$.   
We compare subspaces of $U_2\otimes U_1$ and $U_0$.  
Since most of the elements of $U_2\otimes U_1$ are not pure tensors, the preimages that 
are pure tensors provide a largely {\it ad hoc} distribution of subspaces.  This presents
an opportunity to discover properties of $U_*$ that can be used to distinguish between bimaps.

\begin{defn}
The {\em left idealizer} of $S\leq U_0$ is 
$\lambda(S)=\{u_2\in U_2 \colon u_2 \circ U_1\leq S\}$ and the {\em right 
idealizer} is $\rho(S)=\{u_1\in U_1\colon U_2\circ u_1 \leq S\}$.
These are subspaces of $U_2$ and $U_1$, respectively.
\end{defn}

We collect some basic properties of idealizers (stated just for left idealizers).

\begin{lem}
For subspaces $S$ and $T$ of $U_0$, the following hold:
\begin{enumerate}[(i)]
\item $S\leq T~\Longrightarrow~\lambda(S)\leq \lambda(T)$;
\item $\lambda(S\cap T)= \lambda(S)\cap\lambda(T)$; and
\item $\lambda(S)+\lambda(T)\leq \lambda(S+T)$.
\end{enumerate}
\end{lem}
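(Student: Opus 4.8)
The plan is to deduce all three parts directly from the defining equivalence ``$u_2\in\lambda(S)$ if and only if $u_2\circ U_1\leq S$'', using nothing beyond the bilinearity of $\circ$ and elementary facts about subspaces. First I would record the observation (implicit in the statement, since it refers to sums and intersections of idealizers) that $\lambda(S)$ is a subspace of $U_2$: if $u_2,u_2'\in\lambda(S)$ and $c\in K$, then for every $u_1\in U_1$ the two-sided distributive law gives $(u_2+cu_2')\circ u_1=u_2\circ u_1+c\,(u_2'\circ u_1)\in S$, so $u_2+cu_2'\in\lambda(S)$, and $0\in\lambda(S)$ trivially. The same remark applies to $\rho(S)\leq U_1$.

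For part (i), I would take $u_2\in\lambda(S)$; then $u_2\circ U_1\leq S\leq T$, whence $u_2\in\lambda(T)$, so $\lambda(S)\leq\lambda(T)$. Part (iii) then follows immediately: applying (i) to $S\leq S+T$ and $T\leq S+T$ gives $\lambda(S),\lambda(T)\leq\lambda(S+T)$, and since $\lambda(S+T)$ is a subspace it contains the sum $\lambda(S)+\lambda(T)$.

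Part (ii) requires both inclusions. The inclusion $\lambda(S\cap T)\leq\lambda(S)\cap\lambda(T)$ is a further application of (i), to $S\cap T\leq S$ and $S\cap T\leq T$. For the reverse, take $u_2\in\lambda(S)\cap\lambda(T)$; then $u_2\circ U_1\leq S$ and $u_2\circ U_1\leq T$, so $u_2\circ U_1\leq S\cap T$, that is, $u_2\in\lambda(S\cap T)$.

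No step presents a genuine obstacle; the only point deserving a moment's care is the verification that $\lambda$ sends subspaces to subspaces, which is what gives part (iii) content (one does not expect equality there, exactly as for preimages under a linear map). The symmetric statements for the right idealizer $\rho$ hold by the identical argument with the roles of $U_2$ and $U_1$ interchanged, so no separate proof is needed.
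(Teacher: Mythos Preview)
Your proof is correct. The paper states this lemma without proof, treating it as a routine collection of basic properties of idealizers, and your argument---reducing everything to the defining condition $u_2\circ U_1\leq S$ together with bilinearity---is exactly the elementary verification one would expect.
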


Since the subspaces $\lambda(S)+\lambda(T)$ and $ \lambda(S+T)$
are typically distinct, there is an opportunity to
discover hidden invariants of a bimap. 
Using $\lambda$ and $\rho$, uniform incidences in 
the projective geometry ${\rm PG}(U_0)$ may be lifted to create
generically irregular structures known as {\em subspace arrangements}.
These are more general configurations than the better known 
{\it hyperplane arrangements} studied by Bj\"orner~\cite{Bjorner} and others.
This can help to break symmetries, allowing us to see differences
in bimaps that have no other obvious distinctions.  

Let us define more carefully the objects we use.
An {\em (affine) subspace arrangement} is a set of subspaces of a vector space.  
We define the {\em idealizer arrangements} of $U_*=(U_2, U_1, U_0,\circ)$  
as follows:
\begin{align*}
	\mathcal{A}_2 & = \{ \lambda(P) \colon P\in {\rm PG}(U_0) \} &
	\mathcal{A}_1 & = \{ \rho(P) \colon P\in {\rm PG}(U_0) \}.
\end{align*}
We can build other arrangements for $U_*$ by shuffling coordinates.
One way to discover $\Aut(U_*)$-invariant substructures
is to compute the intersection numbers of the arrangement 
up to some rank. This is essentially what is described as a 
{\em fingerprint} in~\cite{ELGOB}
and the algorithm described there is polynomial in $|U_0|$.  
One could, however, use more refined tools such as the
M\"obius function $\mu\colon {\rm PG}(U_0) \times {\rm PG}(U_0) \to \mathbb{Z}$, where
\begin{align*}
	\mu(P,Q) & = \left\{\begin{array}{cc} 
		1 & \lambda(P)=\lambda(Q),\\
		-\sum_{\lambda(P)\leq X<\lambda(Q)} \mu(\lambda(P),X) & \lambda(P)<\lambda(Q),\\
		0 & \mbox{otherwise}.\end{array}\right.
\end{align*}

\section{Implementation and performance} 
\label{sec:imp}

We have implemented our algorithms in {\sc Magma}. 
The implementation utilizes various packages---all publicly available on GitHub---that have been
developed by the authors and their collaborators~\cite{git:tensor}.

The examples in Sections~\ref{sec:examples} and~\ref{sec:ex-rev} show how our local 
heuristics expose global structures. 
Here we briefly report on experiments with our 
implementation that 
demonstrate their impact.

\subsection{Twisted Heisenberg groups}
\label{subsec:twisted}
Fix a prime $p$ and odd integer $k>1$. 
Put $q:=p^k$, $A:=\Bbb{F}_q$, and choose $\sigma\in{\rm Gal}(\Bbb{F}_q)$.
Define a product $\cdot$ on $A$, where 
$x\cdot y=xy+ix^{\sigma}y^{\sigma^2}$ for $x,y\in A$.
This turns $A$ into
a {\em twisted Albert algebra}, a nonassociative finite division algebra. Finally, define 
a bimap $\bullet\colon A^2\times A^2\bmto A$ by
\[
u\bullet v=u \left[ \begin{array}{rr} 0 & 1 \\ -1 & 0 \end{array} \right] v^{\dagger}.
\]
Over the ground field $\Bbb{F}_p$, this gives a bimap 
$\Bbb{F}_p^{2k}\times\Bbb{F}_p^{2k}\bmto\Bbb{F}_p^k$,
which can be encoded for computation as a system of $k$ alternating forms 
of degree $2k$. These bimaps
arise from the {\em twisted Heisenberg groups}
\[
H(A)=\left\{  \left[ \begin{array}{ccc} 1 & \alpha & \gamma \\ 0 & 1 & \beta \\ 0 & 0 & 1 \end{array} \right]  
\colon \alpha,\beta,\gamma\in A   \right\}.
\]
For various choices $1<e\leq k$, we now compose $\bullet$ with random 
$\varphi\in\Hom(\Bbb{F}_{p^k},\Bbb{F}_{p^e})$
to obtain bimaps $\circ\colon \Bbb{F}_p^{2k}\times\Bbb{F}_p^{2k}\bmto\Bbb{F}_p^e$, where 
$u\circ v:=u\bullet^{\varphi}v=(u\bullet v)\varphi$. The objective is to construct the group $\pseudo(\circ)$.
The default is to use the algorithm outlined in Section~\ref{sec:pseudo} which 
requires us to list $\GL(e,p)$.
An alternative is to build the projective geometry ${\rm PG}(e,p)$ and label its 
points and lines, as described in Section~\ref{sec:local}. 

Table \ref{table1}
records information from
experiments with our implementation. For different choices of $p,k$ and $e$, it records: 
the numbers of points and lines in ${\rm PG}(e,p)$; 
the approximate order of $\GL(e,p)$; 
the order of the subgroup $\Omega$ of $\GL(e,p)$ preserving the point and line labels; 
and finally the order
of the subgroup $\Psi$ of $\Omega$ that lifts to pseudo-isometries of $\circ$. 
For each choice of 
parameters, we performed 10 random trials and recorded the 
$|\Omega|$ and $|\Psi|$ that occurred most often. 
(In fact, these numbers were the same for all runs.)

\begin{table}[htb]
\begin{center}
\begin{tabular}{||c|c|c||c|c|c|c|c||}
\hline\hline
~~$p$~~ & ~~$k$~~ & ~~$e$~~ & ~{~\# points~} & ~{~\# lines}~ 
& ~~$|\GL(e,p)|$~~ &  ~~$|\Omega|$~~ & ~~$|\Psi|$~~ \\
\hline\hline
$3$ & $5$ & $3$ & 13  & 13 & $\approx 10^4$ & 16 & 2   \\ \hline
$3$ & $5$ & $4$ & 40  & 130 & $\approx 10^8$  & 10 & 10  \\ \hline
$3$ & $6$ & $4$ &  40 &  130 & $\approx 10^8$ & 1 & 1  \\ \hline
$5$ & $5$ & $3$ & 31  & 31 & $\approx 10^7$ & 1 &  1 \\ \hline
$5$ & $5$ & $4$ & 156  & 806 & $\approx 10^{11}$  & 20  & 20  \\ \hline \hline
\end{tabular}
\end{center}
\caption{Experiments for the twisted Heisenberg groups}\label{table1}
\end{table}

We remark that other weaker (but more efficiently computed) invariants 
can be used to label the lines in our
projective geometries. Often these suffice to discover structures 
that must be preserved by pseudo-isometries (or by autotopisms in more general applications). 
However, the weaker invariants do not distinguish quotients of twisted Heisenberg groups: 
all resulting line labels are identical.
But, by using the invariants
described in Section~\ref{sec:line-labels}, the problem breaks completely.

\subsection{Random nilpotent Lie algebras of class 2}
\label{subsec:random}
If $U_*=(U_2,U_1,U_0,\circ)$ is a generic bimap---which we loosely define to be one
specified by an arbitrary selection of structure constants---then the group induced by
$\Aut(U_*)$ on $U_0$ is usually very small, and is often trivial. In such cases, 
local invariants (even the weaker ones) almost always cut down to an overgroup 
containing the correct group as a subgroup of very small index.

In Table \ref{table2}, 
we mimic the experiments and reporting of Section~\ref{subsec:twisted}, but this time
we generate alternating bimaps 
$\circ\colon \Bbb{F}_p^d\times\Bbb{F}_p^d\bmto \Bbb{F}_{p}^e$
by selecting $e$ random skew-symmetric $d\times d$ matrices 
over $\Bbb{F}_p$.
Once again, we select the most commonly occurring $|\Omega|$ and $|\Psi|$ from 10 runs 
with each choice of parameters $(d,p,e)$.

\begin{table}[htb]
\begin{center}
\begin{tabular}{||c|c|c||r|r|c|c|c||}
\hline\hline
~~$d$~~ & ~~$p$~~ & ~~$e$~~ & {~\# points}~ & ~{~\# lines}~  & ~~$|\GL(e,p)|$~~ &  ~~$|\Omega|$~~ & ~~$|\Psi|$~~ \\
\hline\hline
$10$ & $3$ & $3$ & 13  & 13 & $\approx 10^4$ & 1 & 1   \\ \hline
$20$ & $3$ & $3$ &  13 & 13 & $\approx 10^4$  & 1 & 1  \\ \hline
$20$ & $3$ & $4$ &  40 &  130 & $\approx 10^8$ & 1 & 1  \\ \hline
$10$ & $3$ & $5$ & 121  & 1210 & $\approx 10^{11}$ & 1 &  1 \\ \hline
$20$ & $5$ & $3$ & 31  & 31 & $\approx 10^{6}$  & 1  & 1  \\ \hline
$10$ & $5$ & $4$ & 156  & 806 & $\approx 10^{11}$  & 1  & 1  \\ \hline
$10$ & $5$ & $5$ & 781  & 20306 & $\approx 10^{17}$  & 1  & 1  \\ \hline \hline
\end{tabular}
\end{center}
\caption{Experiments for Lie algebras of class 2}\label{table2}
\end{table}

These tests suggest that, for generic bimaps, $\pseudo(\circ)$ acts trivially on 
its codomain.
The variation of the dimension, $d$, of the domain space has little impact on the outcome,
but increasing $e$ introduces more constraints and therefore makes it increasingly 
likely that $\pseudo(\circ)$ acts trivially. As we see, the local invariants 
usually detect when this is the case.

\subsection{Do the heuristics always work?} 
The striking practical success of the local invariants raises the obvious question 
of whether we can strengthen 
the theoretical complexity of Theorem~\ref{thm:main}. 
If we label just points and lines, the answer is no: there exist alternating 
bimaps $U_*$ for which all 
points and lines are labeled identically, yet the group induced by $\pseudo(U_*)$ on its
codomain $U_0$ is a proper subgroup of $\Aut(U_0)$; one such example  appears in 
\cite{Verardi}.


\begin{bibdiv}
\begin{biblist}

\bib{Bjorner}{article}{
   author={Bj\"orner, Anders},
   title={Subspace arrangements},
   conference={
      title={First European Congress of Mathematics, Vol.\ I},
      address={Paris},
      date={1992},
   },
   book={
      series={Progr. Math.},
      volume={119},
      publisher={Birkh\"auser, Basel},
   },
   date={1994},
   pages={321--370},
   review={\MR{1341828}},
}
\bib{BNV:enum}{book}{
   author={Blackburn, Simon R.},
   author={Neumann, Peter M.},
   author={Venkataraman, Geetha},
   title={Enumeration of finite groups},
   series={Cambridge Tracts in Mathematics},
   volume={173},
   publisher={Cambridge University Press, Cambridge},
   date={2007},
   pages={xii+281},
   isbn={978-0-521-88217-0},
   review={\MR{2382539}},
}

\bib{magma}{article}{
 author = {Bosma, W.},
 author = {Cannon, J.},
 author = {Playoust, C.},
 title = {The Magma algebra system. I. The user language},
 journal = {J. Symbolic Comput.},
 volume = {24},
 number = {3-4},
 year = {1997},
 pages = {235--265},
 review = {\MR{1484478}}
}

\bib{BL:mod-iso}{article}{
   author={Brooksbank, Peter A.},
   author={Luks, Eugene M.},
   title={Testing isomorphism of modules},
   journal={J. Algebra},
   volume={320},
   date={2008},
   number={11},
   pages={4020--4029},
   issn={0021-8693},
   review={\MR{2464805 (2009h:16001)}}
}

\bib{BMW}{article}{
  author={Brooksbank, Peter A.},
  author={Wilson, James B.},
  author={Maglione, Joshua},
  title={A fast isomorphism test for groups whose Lie algebra has genus $2$},
  journal={J. Algebra},
  volume={473},
  date={2017},
  pages={545--590},
  review={\MR{3591162}}
}

\bib{BO}{article}{
   author={Brooksbank, Peter A.},
   author={O'Brien, E. A.},
   title={Constructing the group preserving a system of forms},
   journal={Internat. J. Algebra Comput.},
   date = {2008},
   volume = {18},
   number = {2},
   pages = {227--241},
   review = {\MR{2403820 (2009g:20020)}}
} 

\bib{BW:autotopism}{article}{
   author={Brooksbank, Peter A.},
   author={Wilson, James B.},
   title={Groups acting on tensor products},
   journal={J. Pure Appl. Algebra},
   volume={218},
   date={2014},
   number={3},
   pages={405--416},
   issn={0022-4049},
   review={\MR{3124207}},
}

\bib{BW:isom}{article}{
   author={Brooksbank, Peter A.},
   author={Wilson, James B.},
   title={Computing isometry groups of Hermitian maps},
   journal={Trans. Amer. Math. Soc.},
   volume={364},
   date={2012},
   number={4},
   pages={1975--1996},
   issn={0002-9947},
   review={\MR{2869196}},
}

\bib{cantor-zassenhaus}{article}{
    AUTHOR = {Cantor, David G.},
    AUTHOR = {Zassenhaus, Hans},
     TITLE = {A new algorithm for factoring polynomials over finite fields},
   JOURNAL = {Math. Comp.},
  FJOURNAL = {Mathematics of Computation},
    VOLUME = {36},
      YEAR = {1981},
    NUMBER = {154},
     PAGES = {587--592},
      ISSN = {0025-5718},
   MRCLASS = {12C05 (12-04)},
  MRNUMBER = {606517},
MRREVIEWER = {Maurice Mignotte},
}

\bib{Dieudonne}{article}{
   author={Dieudonn{\'e}, Jean},
   title={Sur la r\'eduction canonique des couples de matrices},
   journal={Bull. Soc. Math. France},
   volume={74},
   date={1946},
   pages={130--146},
   issn={0037-9484},
   review={\MR{0022826 (9,264f)}},
}

\bib{Eick:Lie}{article}{
   author={Eick, Bettina},
   title={Computing the automorphism group of a solvable Lie algebra},
   journal={Linear Algebra Appl.},
   volume={382},
   date={2004},
   pages={195--209},
   issn={0024-3795},
   review={\MR{2050106}},
}

\bib{eick}{article}{
    AUTHOR = {Eick, Bettina},
     TITLE = {Computing automorphism groups and testing isomorphisms for
              modular group algebras},
   JOURNAL = {J. Algebra},
  FJOURNAL = {Journal of Algebra},
    VOLUME = {320},
      YEAR = {2008},
    NUMBER = {11},
     PAGES = {3895--3910},
      ISSN = {0021-8693},
   MRCLASS = {20C05 (16-04 16S34 20-04)},
  MRNUMBER = {2464798},
MRREVIEWER = {Burkhard K\~A${}^1\!/\!_4$lshammer},
}

\bib{ELGOB}{article}{
   author={Eick, Bettina},
   author={Leedham-Green, C. R.},
   author={O'Brien, E. A.},
   title={Constructing automorphism groups of $p$-groups},
   journal={Comm. Algebra},
   volume={30},
   date={2002},
   number={5},
   pages={2271--2295},
   issn={0092-7872},
   review={\MR{1904637 (2003d:20027)}},
}


\bib{IQ}{article}{
  author={Ivanyos, G\'{a}bor},
  author={Qiao, Youming},
  title={Algorithms based on $*$-algebras, and their applications to isomorphism of polynomials with one secret,
  group isomorphism, and polynomial identity testing},
  journal={SIAM Journal on Computing} 
}

\bib{nauty}{article}{
   author={McKay, Brendan D.},
   title={Practical Graph Isomorphism},
   journal={Congr. Numer.},
   volume={30},
   date={1981},
   number={},
   pages={45--87},
}
	
\bib{Maglione:filters}{article}{
   author={Maglione, Joshua},
   title={Efficient characteristic refinements for finite groups},
   journal={J. Symbolic Comput.},
   volume={80},
   date={2017},
   pages={511--520},
   issn={0747-7171},
   review={\MR{3574524}},
}

\bib{Scharlau}{article}{
   author={Scharlau, Rudolf},
   title={Paare alternierender Formen},
   journal={Math. Z.},
   volume={147},
   date={1976},
   number={1},
   pages={13--19},
   issn={0025-5874},
   review={\MR{0419484 (54 \#7505)}},
}

\bib{Verardi}{article}{
  author={Verardi, Libero},
  title={Semi-extraspecial groups of exponent $p$},
  journal={Ann. Mat. Pura Appl.},
  volume={148},
  number={4},
  pages={131--171},
  date={1987},
  review={\MR{0932762} (89h:20033)}
}


\bib{Vish:2}{article}{
   author={Vishnevetski{\u\i}, A. L.},
   title={A system of invariants of certain groups of class $2$ with
   commutator subgroup of rank two},
   journal={Ukrain. Mat. Zh.},
   volume={37},
   date={1985},
   number={3},
   pages={294--300, 403},
   issn={0041-6053},
   review={\MR{795568 (86k:20033)}},
}

\bib{Wilson:find-central}{article}{
   author={Wilson, James B.},
   title={Finding central decompositions of $p$-groups},
   journal={J. Group Theory},
   volume={12},
   date={2009},
   number={6},
   pages={813--830},
   issn={1433-5883},
   review={\MR{2582050}},
}

\bib{Wilson:GramSchmidt}{article}{
  author={Wilson, James B.},
  title={Optimal algorithms of Gram-Schmidt type},
  journal={Linear Algebra Appl.},
  volume={438},
  date={2013},
  number={12},
  pages={4573--4583},
  review={\MR{3039211}}
}

\bib{Wilson:division}{article}{
   author={Wilson, James B.},
   title={Division, adjoints, and dualities of bilinear maps},
   journal={Comm. Algebra},
   volume={41},
   date={2013},
   number={11},
   pages={3989--4008},
   issn={0092-7872},
   review={\MR{3169502}},
}
	
\bib{Wilson:Skolem-Noether}{article}{
   author={Wilson, James B.},
   title={On automorphisms of groups, rings, and algebras},
   journal={Comm. Algebra},
   volume={45},
   date={2017},
   number={4},
   pages={1452--1478},
   issn={0092-7872},
   review={\MR{3576669}},
}

\bib{git:tensor}{misc}{
  author={Brooksbank, Peter A.},
  author={Maglione, Joshua},
  author={O'Brien, E.A.},
  author={Wilson, James B.},
  title={TheTensor.Space: Repositories for multilinear algebra and isomorphism tests},
  address = {\url{https://github.com/thetensor-space}},
  year = {2019},
  journal= {GitHub repository},
}

\end{biblist}
\end{bibdiv}

\end{document}